\documentclass[11pt]{article}
\usepackage[a4paper,margin=1in]{geometry}
\usepackage{amsmath,amssymb,amsthm,mathtools}
\usepackage{enumitem,microtype}
\usepackage{tikz}
\usetikzlibrary{arrows.meta,positioning}
\usepackage{setspace}
\usepackage[colorlinks=true,linkcolor=blue,citecolor=blue,urlcolor=blue]{hyperref}
\newtheorem{theorem}{Theorem}[section]
\newtheorem{proposition}[theorem]{Proposition}
\newtheorem{lemma}[theorem]{Lemma}

\theoremstyle{definition}
\newtheorem{assumption}[theorem]{Assumption}
\newtheorem{remark}[theorem]{Remark}
\newcommand{\R}{\mathbb R}
\newcommand{\loc}{\mathrm{loc}}
\newcommand{\dd}{\,\mathrm d}

\title{Radial Powered Mean Curvature Flow with Robin Boundary Conditions}
\author{Tianhong Pu$^\dag$, Xiaoqian Xin$^\dag$ and Lixia Yuan$^{\dag,*}$\\
$^\dag$ {\small Mathematics and Science College, Shanghai Normal University, Shanghai 200234, China.}\\
$^*$ {\small Corresponding author: Lixia Yuan  (yuanlixia@shnu.edu.cn)} 
}
\date{\today}

\begin{document}
\maketitle

\begin{abstract}
We study a positive radial graph evolving by $V=H^\alpha+b$, with $\alpha>0$ and $b<0$, in the unit cylinder and with $u_r(1,t)=u(1,t)$. We separate finite-time continuation estimates from the time-uniform interior estimates needed for convergence of normalized translates. On existing classical intervals we prove height and gradient bounds and an explicit conversion of velocity bounds into estimates for $H$, $u_r/r$, and $u_{rr}$. We derive the correct boundary defect for comparisons with finite-slope translators. Under independent interior velocity bounds and two-sided slope trapping, we prove compactness and identify every translated limit with the cup translator. We identify the radial zero-number comparison still required to establish that trapping. Stationary caps and power-law Robin laws are included.
\end{abstract}

\section{Introduction}

Let $B_1\subset\R^N$, $N\ge2$, and let $u=u(r,t)$ be a radial graph. With the upward normal convention,
\begin{equation}\label{eq:pde}
 u_t=(H^\alpha+b)\sqrt{1+u_r^2},\qquad
 H=\frac{u_{rr}}{(1+u_r^2)^{3/2}}+
 \frac{N-1}{r}\frac{u_r}{\sqrt{1+u_r^2}},
\end{equation}
and
\begin{equation}\label{eq:bc}
 u_r(0,t)=0,\qquad u_r(1,t)=u(1,t).
\end{equation}
At $r=0$ the equation is interpreted through the smooth radial lift to $\R^N$; in particular $H(0,t)=Nu_{rr}(0,t)$. We take compatible smooth positive data with $u_{0rr}>0$ and $(H[u_0]^\alpha+b)>0$.

The literature relevant to this problem includes the planar periodic waves of Lou \cite{Lou2009}, the one-dimensional unbounded-slope convergence theorem of Yuan--Zhao \cite{YuanZhao2022}, and the radial constant-angle theory of Lou--Yuan \cite{LouYuan2023}. In the unbounded-slope planar case, Lou--Wang--Yuan \cite{LouWangYuan2021} first obtain time-uniform interior gradient estimates by a zero-number argument and only then take time translates. The radial zero-number property needed here is available from Angenent \cite{Angenent1988} and Chen--Pol\'a\v cik \cite{ChenPolacik1996}.

The main logical point is the following. A bound on $[0,T]$ with a constant $C_T$ is enough for continuation at a prescribed finite time. It is not enough for convergence as $t\to\infty$. For convergence one needs, for every $R<1$ and $\tau>0$,
\begin{equation}\label{eq:uniform-needed}
 \sup_{t\ge\tau}\sup_{0\le r\le R}(|u_r|+|u_{rr}|+|u_t|)<\infty.
\end{equation}
The boundary is excluded because $u_r(1,t)=u(1,t)$ diverges on the cup branch.

\begin{figure}[htbp]
\centering
\begin{tikzpicture}[>=Latex,font=\small]
 \draw[->] (-.15,0)--(3.7,0) node[right] {$r$};
 \draw[->] (0,-.12)--(0,3.1) node[above] {$u$};
 \draw[thick,blue!65!black,domain=0:3.2,samples=80] plot (\x,{.3+.25*\x*\x});
 \coordinate (P) at (2,1.3); \fill (P) circle (1.4pt);
 \draw[->,thick] (P)--(2.65,1.95) node[right] {$T$};
 \draw[->,thick] (P)--(1.35,1.95) node[above left] {$\nu$};
 \draw[densely dashed,gray] (P)--(2,0) node[below,black] {$r_0$};
 \node[anchor=west] at (4.0,2.45) {$T=(1,u_r)/\sqrt{1+u_r^2}$};
 \node[anchor=west] at (4.0,1.80) {$\nu=(-u_r,1)/\sqrt{1+u_r^2}$};
 \node[anchor=west] at (4.0,1.05) {$\displaystyle\kappa_r=\frac{u_{rr}}{(1+u_r^2)^{3/2}}$};
 \node[anchor=west] at (4.0,.28) {$\displaystyle\kappa_\tau=\frac{u_r}{r\sqrt{1+u_r^2}}$};
\end{tikzpicture}
\caption{Meridian geometry and the decomposition $H=\kappa_r+(N-1)\kappa_\tau$. The illustrative meridian is $u=0.3+r^2/4$; the drawn tangent and normal have their exact directions at $r_0=2$.}
\label{fig:geometry}
\end{figure}

Put $B=(-b)^{1/\alpha}$. The following theorem states exactly what the convergence proof uses.
\begin{theorem}[Conditional cup convergence]\label{thm:main}
Assume the translating-profile properties in Assumption~\ref{ass:profiles}, and let $u$ be a global smooth solution of \eqref{eq:pde}--\eqref{eq:bc}. Suppose that for each $R<1$ and $\tau>0$ there are constants $P_{R,\tau},V_{R,\tau}$, independent of time, such that
\begin{equation}\label{eq:PV}
 0\le u_r(r,t)\le P_{R,\tau},\qquad 0<u_t(r,t)\le V_{R,\tau}quad(0\le r\le R, t\ge\tau),
\end{equation}
and suppose that for every sufficiently large finite slope $h$,
\begin{equation}\label{eq:slope-trap}
 \Phi_h'(r)\le u_r(r,t)\le\Phi_*'(r)quad(0\le r\le R, t\ge t_{h,R}).
\end{equation}
Then
\begin{equation}\label{eq:conv}
 u(r,s+t)-u(0,s)\longrightarrow\Phi_*(r)+c_*t
 \quad\text{in }C^{2,1}_{\loc}([0,1)\times\R).
\end{equation}
Consequently $u_t\to c_*$ and $u_{rr}\to\Phi_*''$ locally uniformly in $r<1$.
\end{theorem}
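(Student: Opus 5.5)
The argument has three stages: compactness of the normalized translates, identification of every limit as the cup translator, and upgrading subsequential convergence to full convergence.

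\emph{Compactness.} Set $w_s(r,t)=u(r,s+t)-u(0,s)$ for $s\ge\tau+1$ and $t\ge -s+\tau$. By \eqref{eq:PV}, for each $R<1$ the quantities $|\partial_r w_s|$ and $\partial_t w_s$ are bounded on $[0,R]\times[-T,T]$ uniformly in $s$. Since $w_s(0,0)=0$, $w_s$ is also uniformly bounded there. Next I invoke the conversion lemma for the equation. Because $u_t=(H^\alpha+b)\sqrt{1+u_r^2}>0$ and $u_t\le V_{R,\tau}$, we get $0<H^\alpha+b\le V_{R,\tau}$, so $H$ lies in $(B,(V_{R,\tau}+|b|)^{1/\alpha}]$. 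This two-sided bound keeps the equation uniformly parabolic on $[0,R]$, since $\partial(H^\alpha)/\partial u_{rr}=\alpha H^{\alpha-1}(1+u_r^2)^{-3/2}$ is bounded above and below. With the slope bound it also controls $u_r/r$ and $u_{rr}$, exactly as in the earlier conversion of velocity bounds.

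Near $r=0$ I would work with the radial lift to $\R^N$. There the equation is a smooth fully nonlinear uniformly parabolic equation with bounded gradient and bounded $D^2u$, and it is concave/convex in $D^2u$ after composing with $H\mapsto H^\alpha$ on a compact range. Krylov--Safonov, then Evans--Krylov (or simply differentiating in $t$ and using Schauder, since $u_t$ solves a linear equation), give uniform $C^{2+\beta,1+\beta/2}$ bounds on $B_R\times[-T,T]$. Arzelà--Ascoli together with a diagonal argument then yield, along any $s_k\to\infty$, a subsequence with $w_{s_k}\to w_\infty$ in $C^{2,1}_{\loc}([0,1)\times\R)$. The limit $w_\infty$ is an entire solution of \eqref{eq:pde} on $[0,1)\times\R$ with $\partial_t w_\infty\ge0$.

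\emph{Identification.} Passing \eqref{eq:slope-trap} to the limit gives $\Phi_h'\le\partial_r w_\infty\le\Phi_*'$ on $[0,R]\times\R$ for every large finite $h$. Assumption~\ref{ass:profiles} should provide $\Phi_h'\to\Phi_*'$ locally uniformly on $[0,1)$ as $h\to\infty$; letting $h\to\infty$ and then $R\to1$ gives $\partial_r w_\infty(r,t)=\Phi_*'(r)$. Hence $w_\infty(r,t)=\Phi_*(r)-\Phi_*(0)+g(t)$. Substituting into \eqref{eq:pde}, the right-hand side depends only on $r$ and equals the translator's speed, so $g'(t)=(H[\Phi_*]^\alpha+b)\sqrt{1+\Phi_*'^2}=c_*$ by the profile equation. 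Since $w_\infty(0,0)=0$, we get $g(t)=c_*t$. With the normalization $\Phi_*(0)=0$ this is $w_\infty=\Phi_*+c_*t$. The limit is therefore independent of the subsequence, so the whole family converges, which is \eqref{eq:conv}. The consequences $u_t\to c_*$ and $u_{rr}\to\Phi_*''$ follow by evaluating at $t=0$, since $C^{2,1}$ convergence includes $\partial_t$ and $\partial_r^2$.

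\emph{Main obstacle.} I expect the main difficulty to be the uniform higher regularity at the axis $r=0$, where the term $(N-1)u_r/r$ is singular in the one-dimensional formulation. To handle it I would consistently pass to the $N$-dimensional lift and apply interior estimates there, using the $H$-bounds to verify ellipticity. A secondary point is checking that Assumption~\ref{ass:profiles} really supplies monotone convergence $\Phi_h'\uparrow\Phi_*'$. If it only gives pointwise convergence, the squeeze still works pointwise, which suffices because $w_\infty$ is $C^1$.
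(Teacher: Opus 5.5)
Your proposal is correct and follows the paper's proof essentially step for step. You derive time-uniform Hessian and ellipticity bounds from \eqref{eq:PV} via the velocity-to-Hessian conversion, use interior concave/convex fully nonlinear regularity on the radial lift for compactness of the translates, pass \eqref{eq:slope-trap} to the limit, let $h\to\infty$, substitute to get $g'=c_*$, normalize at $(0,0)$, and conclude by uniqueness of subsequential limits. Your hedge about the profiles is unnecessary, since Assumption~\ref{ass:profiles} gives $\Phi_h\to\Phi_*$ in $C^2([0,R])$ for each $R<1$, and that is exactly what the paper uses in the squeeze.
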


The estimate \eqref{eq:PV} and the slope trapping \eqref{eq:slope-trap} are not consequences of finite-time constants $C_T$. Their proof is the higher-dimensional analogue of Section~4 of \cite{YuanZhao2022} and Section~4 of \cite{LouWangYuan2021}; the exact radial boundary defect is displayed in Section~4 below.

\section{Translating profiles}

\begin{assumption}[Profiles from Lou--Yuan]\label{ass:profiles}
There are $c_*>0$ and $\Phi_*$ on $[0,1)$ satisfying
\begin{equation}\label{eq:TS}
 (H[\Phi_*]^\alpha+b)\sqrt{1+\Phi_*'^2}=c_*,\qquad
\Phi_*(0)=\Phi_*'(0)=0,
\end{equation}
with $\Phi_*',\Phi_*''>0$, $\Phi_*(1^-)<\infty$, and $\Phi_*'(r)\to\infty$ as $r\uparrow1$. For arbitrarily large $h$ there are finite-slope profiles $\Phi_h$ and speeds $c(h)$ with $\Phi_h'(1)=h$, $c(h)\uparrow c_*$, and $\Phi_h\to\Phi_*$ in $C^2([0,R])$ for each $R<1$. We also use the profile-ODE ordering from the same result: for every sufficiently large $h$ one can choose $q=q(h)<1$ so that the compatible profile $\rho_q$ in \eqref{eq:rhoq} satisfies
\[
 \Phi_h'(r)<q\Phi_*'(qr)<\Phi_*'(r)\qquad(0<r<1).
\]
\end{assumption}

These are the only translating-solution facts used from \cite[Theorems 1.3--1.4 and Section 2]{LouYuan2023}. In particular, the endpoint-height and infinite-slope properties are hypotheses here, not an automatic consequence of writing $B<N$ without fixing the curvature normalization.

For a smooth finite-slope translator $L_h=\Phi_h+c(h)t+A_h$, the interior equation is an equality. For a general increasing boundary law $u_r(1,t)=g(u(1,t))$, the lower-solution condition is exactly
\begin{equation}\label{eq:barrier-condition}
 g(\Phi_h(1)+A_h)\ge h.
\end{equation}
If comparison starts at $t=0$, one also needs $A_h\le\min_{[0,1]}(u_0-\Phi_h)$. For $g(s)=s$, these two inequalities can fail to have a common $A_h$ for a given $h$; the phrase ``after a vertical translation'' must therefore include both conditions.

\begin{lemma}[Singular upper comparison]\label{lem:upper}
If $u_0\le\Phi_*+C$ and a smooth solution exists on $[0,T]$, then
\begin{equation}\label{eq:upper}
 u(r,t)\le\Phi_*(r)+c_*t+C\quad(0\le r\le1, 0\le t\le T).
\end{equation}
\end{lemma}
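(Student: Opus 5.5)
Set $U(r,t)=\Phi_*(r)+c_*t+C$. By \eqref{eq:TS}, $U$ solves \eqref{eq:pde} exactly on $[0,1)\times[0,T]$. At $r=0$ we have $\Phi_*'(0)=0$, so the symmetry condition matches. At $r=1$ the barrier has infinite slope, $\Phi_*'(1^-)=\infty$, while $u_r(1,t)=u(1,t)$ is finite on the existence interval. The plan is to compare $u$ with slightly shrunk copies of $U$ that are regular up to $r=1$, and then let the shrinking parameter go to zero.

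\textbf{Regularized barriers.} For $q<1$ close to $1$, let $\rho_q(r)=q^{-1}\Phi_*(qr)$. This is the parabolic rescaling of $\Phi_*$, smooth on $[0,1]$ with finite slope $\rho_q'(1)=\Phi_*'(q)$. Since $H[\rho_q](r)=qH[\Phi_*](qr)$, the rescaled function $q^{-1}U(qr,q^2t)$ solves the same equation, but with $H^\alpha$ and $b$ perturbed by factors tending to $1$. So I would not use a pure scaling. Instead I would take the profile $\rho_q$ of \eqref{eq:rhoq} as given, with its speed $c_q\to c_*$ and $\rho_q\to\Phi_*$ locally uniformly, and set
\[
U_q(r,t)=\rho_q(r)+c_qt+C+\varepsilon_q,\qquad \varepsilon_q\to0,
\]
with $\varepsilon_q$ chosen so that $U_q(\cdot,0)\ge u_0$. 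This is possible because $\Phi_*$ is bounded and $\rho_q\to\Phi_*$ uniformly on $[0,1]$.

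\textbf{Comparison.} Consider $w=u-U_q$ and suppose it first reaches $0$ at time $t_0>0$ and radius $r_0$. There are three cases.
\begin{itemize}[nosep]
\item If $r_0\in(0,1)$, the equality case of the strong maximum principle for the quasilinear parabolic operator gives a contradiction.
\item If $r_0=0$, we use the smooth radial lift and argue the same way.
\item If $r_0=1$, Hopf's lemma gives $u_r(1,t_0)>\rho_q'(1)=\Phi_*'(q)$. But the boundary condition gives $u_r(1,t_0)=u(1,t_0)\le \max_{[0,T]\times[0,1]}u=:M_T<\infty$.
\end{itemize}
Choosing $q$ so close to $1$ that $\Phi_*'(q)>M_T$ excludes the boundary case. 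Hence $u\le U_q$ on $[0,T]$. Letting $q\uparrow1$ yields \eqref{eq:upper}.

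\textbf{Main obstacle.} The difficulty is making $U_q$ a genuine supersolution in the interior. If $\rho_q$ is only a rescaling, the equation is satisfied up to an error. Since $H>0$ along $\Phi_*$ when $c_*>0$, the map $q\mapsto (qH)^\alpha$ is monotone. So I would first try to check that $\rho_q$ is a supersolution for a slightly larger speed $c_q'$ with $c_q'\to c_*$. Failing that, I would add a small time correction $\delta_q t$ with $\delta_q\to0$ to absorb the error, using that $\sqrt{1+\rho_q'^2}$ is bounded on $[0,1]$ for fixed $q$. Either way the limit $q\uparrow1$ recovers exactly $\Phi_*+c_*t+C$.
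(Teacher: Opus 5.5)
\textbf{Gap.} The gap is at the step you label the main obstacle. You never show that your comparison function is a supersolution in $0\le r<1$, and the object you substitute for the pure scaling does not have the properties you give it.

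The profile in \eqref{eq:rhoq} is $\rho_q=\Phi_*(qr)+M_q$ with $M_q=q\Phi_*'(q)-\Phi_*(q)$. It is a squeezed cup, used in Section~4 as special initial datum, not a translating profile. So there is no speed $c_q\to c_*$ attached to it. Also $M_q\to\infty$, so it does not converge to $\Phi_*$.

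For the shape $\Phi_*(qr)$ the residual $F[\Phi_*(q\,\cdot)]-c_*$ has no evident sign. Multiplying the slope by $q$ distorts $\kappa_r$, $\kappa_\tau$ and $\sqrt{1+u_r^2}$ differently, and the distortion matters most where $\Phi_*'(qr)$ is large. Your fallback of absorbing the residual into $\delta_q t$ with $\delta_q\to0$ therefore needs a uniform residual estimate up to $qr=q$, near the singular endpoint, which you do not provide. As written, the interior comparison is unsupported.

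\textbf{Repair.} The fix is the pure scaling you discarded; you only need a sign, not an exact solution. With $\tilde\rho_q(r)=q^{-1}\Phi_*(qr)$ you have $H[\tilde\rho_q](r)=qH_*(qr)$ and $\tilde\rho_q'(r)=\Phi_*'(qr)$. By \eqref{eq:TS},
\[
F[\tilde\rho_q](r)=\bigl(q^\alpha H_*(qr)^\alpha+b\bigr)\sqrt{1+\Phi_*'(qr)^2}
=c_*-(1-q^\alpha)H_*(qr)^\alpha\sqrt{1+\Phi_*'(qr)^2}<c_*,
\]
since $H_*=H[\Phi_*]>B>0$. Hence $\tilde\rho_q+c_*t+C+\varepsilon_q$ is a strict supersolution with the unchanged speed $c_*$. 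The rest of your argument then goes through, with three small corrections:
\begin{enumerate}[nosep]
\item Take $\varepsilon_q=\max_{[0,1]}(\Phi_*-\tilde\rho_q)\to0$, which holds by uniform continuity of $\Phi_*$ on $[0,1]$. Add a further $\delta>0$ so the initial ordering is strict and the first contact really occurs at some $t_0>0$.
\item Interior and centre contacts are excluded directly by strictness at first contact. You do not need the strong maximum principle, and the operator is fully nonlinear for $\alpha\ne1$ anyway.
\item At $r=1$, maximality alone gives $u_r(1,t_0)\ge\Phi_*'(q)>M_T\ge u(1,t_0)=u_r(1,t_0)$, a contradiction. Hopf's lemma is not needed.
\end{enumerate}

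\textbf{Comparison with the paper.} The paper does not regularize at all. It uses $W_\delta=\Phi_*+c_*t+C+\delta(1+t)$, which is strict because of the $\delta$ term. It handles $r=1$ by noting that $u_r$ is bounded on $[0,T]$ while $\Phi_*'\to\infty$, so $u-W_\delta$ is strictly decreasing near $r=1$ and the maximum is attained inside. The infinite slope is thus an asset, not an obstacle. Your repaired route recovers it through the choice $\Phi_*'(q)>M_T$, at the cost of an extra limit $q\uparrow1$.
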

\begin{proof}
For $\delta>0$ put $W_\delta=\Phi_*+c_*t+C+\delta(1+t)$. It is a strict supersolution in $r<1$. At an interior first contact with $u-W_\delta$, equality of first derivatives and nonpositivity of the Hessian difference imply $u_t\le F[W_\delta]<(W_\delta)_t$, a contradiction. At the outer boundary, a classical solution has finite $u_r$ on the fixed time interval, whereas $\Phi_*'(r)\to\infty$; consequently $u-W_\delta$ is strictly decreasing in a one-sided neighbourhood of $1$, so a positive boundary maximum cannot be the first contact. Letting $\delta\downarrow0$ proves the claim.
\end{proof}

\begin{figure}[htbp]
\centering
\begin{tikzpicture}[>=Latex,font=\small]
 \begin{scope}
  \fill[blue!8] (0,0) rectangle (3.2,2.1);
  \draw[blue!65!black,thick] (0,0) rectangle (3.2,2.1);
  \draw[->] (-.1,0)--(3.7,0) node[right] {$r$};
  \draw[->] (0,-.1)--(0,2.9) node[above] {$t$};
  \node[below] at (0,0) {$0$}; \node[below] at (3.2,0) {$1$};
  \node[left] at (0,2.1) {$T$};
  \node[align=center] at (1.6,1.1) {finite cylinder\\bounds $C_T$};
  \node at (1.6,-.8) {finite-time continuation};
 \end{scope}
 \begin{scope}[xshift=6.3cm]
  \fill[teal!8] (0,.45) rectangle (2.35,2.65);
  \draw[teal!70!black,thick,->] (2.35,.45)--(2.35,2.85);
  \draw[->] (-.1,0)--(3.7,0) node[right] {$r$};
  \draw[->] (0,-.1)--(0,2.9) node[above] {$t$};
  \draw[densely dashed,gray] (3.2,0)--(3.2,2.7);
  \draw[thick,teal!70!black] (0,1.05) rectangle (2.35,1.8);
  \node[below] at (2.35,0) {$R<1$}; \node[below] at (3.2,0) {$1$};
  \node[left] at (0,.45) {$\tau$};
  \node[font=\footnotesize] at (1.16,1.43) {$[s-L,s+L]$};
  \node at (1.6,-.8) {bounds independent of $s$};
 \end{scope}
\end{tikzpicture}
\caption{The domains of the two different estimates. The right-hand constants must remain bounded as the time window moves upwards. Its spatial range stays strictly inside $r=1$; the finite-time estimate on the left does not imply this uniform bound.}
\label{fig:barriers}
\end{figure}

\section{Finite-time estimates and continuation}

Set $p=u_r$, $S=(1+p^2)^{1/2}$ and $w=u_t$. The useful identities are
\begin{equation}\label{eq:Fq}
 F_q=\frac{\alpha H^{\alpha-1}}{1+p^2},\qquad
 w_t=F_qw_{rr}+F_pw_r,
\end{equation}
with radial boundary conditions
\begin{equation}\label{eq:wbc}
 w_r(0,t)=0,\qquad w_r(1,t)=w(1,t).
\end{equation}
The second identity is the crucial difference from a fixed contact-angle problem, for which time differentiation gives $w_r(1,t)=0$.

\begin{lemma}[Finite-time first-order bounds]\label{lem:C1}
Let $m_0=\min_{[0,1]}F[u_0]>0$. On every existing classical interval in the positive-curvature branch,
\begin{equation}\label{eq:wlower}
 u_t\ge m_0,qquad H>B.
\end{equation}
If the upper comparison \eqref{eq:upper} holds, then for every $T<\infty$,
\begin{equation}\label{eq:C1T}
 0<u(r,t)\le U_T:=C_0+c_*T,qquad
 0\le u_r(r,t)\le P_T:=\max\{\|u_0'\|_\infty,U_T\}.
\end{equation}
\end{lemma}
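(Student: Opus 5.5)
We prove the velocity bound \eqref{eq:wlower} from the linear equation \eqref{eq:Fq} for $w=u_t$, and then obtain the height and slope bounds \eqref{eq:C1T} from Lemma~\ref{lem:upper}, monotonicity in $r$, and the Robin law \eqref{eq:bc}.

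\textbf{Lower velocity bound.} The function $w=u_t$ satisfies the linear parabolic equation $w_t=F_qw_{rr}+F_pw_r$. Here $F_q=\alpha H^{\alpha-1}/(1+p^2)>0$ as long as we stay in the positive-curvature branch. The boundary conditions are \eqref{eq:wbc}, and initially $w(\cdot,0)=F[u_0]\ge m_0$.

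We apply the minimum principle. Fix $\varepsilon>0$ and suppose $w$ first reaches $m_0-\varepsilon$ at a time $t_1>0$ and a point $r_1$. We split into three cases.
\begin{itemize}
\item If $r_1\in(0,1)$, then $w_r=0$, $w_{rr}\ge0$ and $w_t\le0$ there. To get a strict contradiction we use $w+\varepsilon e^{t}$-type perturbations in the standard way.
\item If $r_1=0$, the radial lift makes $0$ an interior point of $B_1$, so the same argument applies.
\item If $r_1=1$, then $w_r(1,t_1)=w(1,t_1)=m_0-\varepsilon>0$. Hence $w$ is strictly increasing near $r=1$, and a minimum cannot be attained there.
\end{itemize}
This last case is where the sign of the Robin coefficient matters: $w>0$ makes the boundary unfavourable for minima. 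We therefore get $u_t\ge m_0$.

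From \eqref{eq:pde}, $u_t\ge m_0>0$ gives $(H^\alpha+b)\sqrt{1+p^2}>0$. Hence $H^\alpha>-b$, i.e. $H>B$. This also shows that the solution stays in the positive-curvature branch, which closes the bootstrap: on the maximal interval where $H>0$ we get $H>B>0$, so by continuity the branch persists.

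\textbf{Height bound.} Positivity follows because $u(r,t)\ge u_0(r)+m_0t>0$. For the upper bound, Lemma~\ref{lem:upper} gives $u\le\Phi_*+c_*t+C$. We use $\Phi_*(1^-)<\infty$ and set $C_0=C+\sup\Phi_*$, which yields $u\le U_T$ for $t\le T$.

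\textbf{Slope bound.} We first show $u_r\ge0$. The function $p$ satisfies a parabolic equation obtained by differentiating \eqref{eq:pde} in $r$. Because of the term $(N-1)p/r$, this equation has a zeroth-order coefficient, but $p=0$ is a solution. The boundary values are $p(0,t)=0$ and $p(1,t)=u(1,t)>0$, and initially $p(\cdot,0)=u_0'\ge0$ since $u_{0rr}>0$. The maximum principle then keeps $p\ge0$. Alternatively, $H>0$ together with the radial structure gives $(r^{N-1}\,p/S)_r>0$, and hence $p>0$ for $r>0$ directly.

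For the upper bound on $p$, consider a maximum of $p$ over $[0,1]\times[0,T]$.
\begin{itemize}
\item If it is attained at $r=1$, then $p=u(1,t)\le U_T$.
\item If it is attained at $t=0$, it is at most $\|u_0'\|_\infty$.
\item An interior maximum for $t>0$ is excluded by the maximum principle for the $p$-equation. Here the zeroth-order term $-(N-1)F_q\,p/r^2$-type has a favourable sign at a positive maximum; we must verify this sign from the actual derivative of $\kappa_\tau$.
\end{itemize}
Hence $p\le P_T$.

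\textbf{Main obstacle.} The main difficulty is the sign of the lower-order terms in the $p$-equation, which comes from the $(N-1)u_r/(r\sqrt{1+u_r^2})$ term. If it proves awkward, we will try first the following alternative. Using $u_{rr}$ (not necessarily positive a priori) together with $H>0$, we have $(r^{N-1}p/S)_r=r^{N-1}H>0$, so $r^{N-1}p/S$ is increasing in $r$. Since $p/S$ is increasing in $p$, this yields
\[
 \frac{p(r)}{S(r)}\le r^{1-N}\frac{p(1)}{S(1)}.
\]
This bounds $p$ away from the boundary in terms of $p(1)=u(1,t)\le U_T$, but the bound degenerates for small $r$. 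Near $r=0$ we would instead use smoothness together with the maximum-principle argument.
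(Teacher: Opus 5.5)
Your proposal follows essentially the same route as the paper. You use the minimum principle for $w=u_t$ against the constant $m_0$; your observation that $w_r=w>0$ at a boundary contact is exactly the paper's strict boundary residual $-m_0<0$. Then $H>B$ comes from $w=(H^\alpha+b)S$, the height bounds from integrating $u_t\ge m_0$ and from Lemma~\ref{lem:upper}, positivity of $p$ from the flux identity $(r^{N-1}p/S)_r=r^{N-1}H$, and the upper bound from the maximum principle for the $p$-equation.

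The sign you left to verify is indeed favourable. The only explicit $r$-dependence of $F$ is through $\kappa_\tau=p/(rS)$, which gives the exact term $-\alpha(N-1)H^{\alpha-1}p/r^2$ in the $p$-equation. This term is negative at any positive interior maximum, so such a maximum is excluded outright, and your fallback via $r^{1-N}p(1)/S(1)$ is unnecessary.
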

\begin{proof}
The constant $m_0$ is a subsolution for the linear equation in \eqref{eq:Fq}: its interior residual is zero and its outer boundary residual is $-m_0<0$. The radial maximum principle applied to $w-m_0$ gives $w\ge m_0$; the centre is handled by the smooth radial lift. Since $w=(H^\alpha+b)S$ and $S\ge1$, \eqref{eq:wlower} gives $H>B$.

Integration in time gives $u\ge u_0+m_0t>0$. The upper bound is \eqref{eq:upper}. To justify positivity of $p$ without assuming preservation of convexity, use
\[
 \frac{p(r,t)}{\sqrt{1+p(r,t)^2}}
 =r^{1-N}\int_0^r H(\rho,t)\rho^{N-1}\dd\rho>0\quad(r>0).
\]
Differentiating \eqref{eq:pde} gives the exact equation
\begin{equation}\label{eq:p-detailed}
 p_t=F_qp_{rr}+F_pp_r
       -\frac{\alpha(N-1)H^{\alpha-1}p}{r^2}.
\end{equation}
At an interior positive maximum of $p$, the diffusion term is nonpositive and the last term is negative. At $r=0$, $p=0$, and at $r=1$, $p=u(1,t)\le U_T$. Applied to $p-P_T-\delta t$ and followed by $\delta\downarrow0$, the first-contact maximum principle gives \eqref{eq:C1T}. Its constant depends on $T$; it is not a time-uniform estimate.
\end{proof}

\begin{lemma}[Velocity-to-Hessian conversion]\label{lem:Hessian}
On a radial cylinder $[0,R]\times I$ on the positive-curvature branch, suppose $|u_r|\le P$ and $0\le u_t\le V$. Put $K=(V-b)^{1/\alpha}$ and $S_P=(1+P^2)^{1/2}$. Then
\begin{align}
 B\le H&\le K,\qquad
 0\le\frac{u_r}{r}\le\frac{S_PK}{N},\label{eq:Hbounds}\\
 |u_{rr}|&\le S_P^3K.\label{eq:rrbound}
\end{align}
The quotient at $r=0$ is the continuous radial value.
\end{lemma}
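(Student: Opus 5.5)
We read off $H$ from the equation, then bound $u_r/r$ by the divergence identity and $u_{rr}$ from the definition of $H$.

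\emph{Step 1: bounds on $H$.} On the positive-curvature branch $H>0$. From \eqref{eq:pde}, $H^\alpha+b=u_t/S$ with $S=(1+u_r^2)^{1/2}\ge1$. Since $0\le u_t\le V$, we get $0\le H^\alpha+b\le V$. Hence $H^\alpha\ge -b=B^\alpha$, so $H\ge B$. We also get $H^\alpha\le V-b$, so $H\le K$. (Here $V-b>0$ because $b<0$.)

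\emph{Step 2: bound on $u_r/r$.} We use the identity already used in the proof of Lemma~\ref{lem:C1},
\[
 \frac{p}{\sqrt{1+p^2}}=r^{1-N}\int_0^r H\rho^{N-1}\dd\rho .
\]
This holds because $H=r^{1-N}\bigl(r^{N-1}p/S\bigr)_r$ and $r^{N-1}p/S$ vanishes at $r=0$. Inserting $0<H\le K$ gives
\[
 0\le \frac{p}{S}\le \frac{K r}{N}.
\]
Multiplying by $S\le S_P$ and dividing by $r$ yields $0\le u_r/r\le S_PK/N$ for $0<r\le R$. At $r=0$ the quotient has the continuous limit $u_{rr}(0,t)$. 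Since $H(0,t)=Nu_{rr}(0,t)$, this limit equals $H(0,t)/N\le K/N$, so the bound persists there.

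\emph{Step 3: bound on $u_{rr}$.} Solve the formula for $H$ for the radial curvature:
\[
 \frac{u_{rr}}{S^3}=H-(N-1)\frac{p}{rS}.
\]
By Step 2, $0\le (N-1)p/(rS)\le (N-1)K/N$, and by Step 1, $0\le H\le K$. The right-hand side is a difference of two numbers in $[0,K]$, so its absolute value is at most $K$. Multiplying by $S^3\le S_P^3$ gives $|u_{rr}|\le S_P^3K$. At $r=0$ the bound reads $u_{rr}=H/N\le K$, which is consistent.

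\emph{Main obstacle.} The one delicate point is Step 2 near $r=0$. The integral identity requires $r^{N-1}p/S\to0$, and the quotient $u_r/r$ must be understood through the smooth radial lift. Regularity of the lift gives $p=O(r)$, so both requirements are met. Everything else is elementary bookkeeping. The real requirement is the hypothesis $u_t\ge0$, which is supplied either by Lemma~\ref{lem:C1} or by \eqref{eq:PV}.
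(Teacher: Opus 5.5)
Your proposal is correct and follows the paper's proof essentially step for step. You read off $H$ from $H^\alpha+b=u_t/S$, integrate the flux identity $(r^{N-1}z)_r=r^{N-1}H$ from the centre to get $z/r\le K/N$, and then use $u_{rr}=S^3\bigl(H-(N-1)z/r\bigr)$. Your remark that both terms lie in $[0,K]$ simply spells out the absolute-value step that the paper leaves implicit.
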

\begin{proof}
The equation gives $H=((u_t/S)-b)^{1/\alpha}$, hence $B\le H\le K$. With $z=u_r/S$, the radial identity is
\[
 z_r+\frac{N-1}{r}z=H,qquad
 \frac zr=r^{-N}\int_0^rH(\rho,t)\rho^{N-1}\dd\rho\le K/N.
\]
Since $u_r=Sz$, this gives the second inequality in \eqref{eq:Hbounds}. Finally,
\[
 u_{rr}=S^3\left(H-(N-1)\frac zr\right),
\]
which proves \eqref{eq:rrbound}, including at the centre by taking the integral limit.
\end{proof}

\begin{proposition}[What is sufficient for continuation]\label{prop:continuation}
Assume that, for each $T<\infty$, the existing solution satisfies the finite-time velocity upper bound
\begin{equation}\label{eq:finiteV}
 \sup_{0\le t<\min\{T,T_{\max}\}}\|u_t(\cdot,t)\|_{\infty}\le V_T<\infty.
\end{equation}
Then $u$, $u_r$, $u_{rr}$ are bounded on the same cylinder, and $F_q$ is bounded above and below by positive constants depending on $T$. With the corresponding oblique-boundary regularity theorem, this excludes a finite maximal time.
\end{proposition}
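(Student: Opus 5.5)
The plan is to feed the velocity bound \eqref{eq:finiteV} into the earlier lemmas on the whole cylinder $[0,1]\times[0,T']$, with $T'<\min\{T,T_{\max}\}$. This gives uniform $C^2$ control and uniform ellipticity, and continuation then follows from the standard quasilinear oblique-derivative theory. I would proceed in four steps.

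\textbf{Step 1 (first-order bounds).} Lemma~\ref{lem:C1} gives $u_t\ge m_0>0$ and $H>B$. Assuming $u_0\le\Phi_*+C_0$, Lemma~\ref{lem:upper} and Lemma~\ref{lem:C1} give $0<u\le U_T$ and $0\le u_r\le P_T$ up to $r=1$. Even without the upper comparison, height is controlled directly: $u\le \max u_0+V_T T$ by integrating $u_t\le V_T$. Then the maximum-principle argument for $p$ in the proof of Lemma~\ref{lem:C1}, using the boundary value $p(1,t)=u(1,t)$, yields $0\le u_r\le P_T:=\max\{\|u_0'\|_\infty,\max u_0+V_TT\}$.

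\textbf{Step 2 (second-order bounds).} Apply Lemma~\ref{lem:Hessian} with $R=1$, $P=P_T$ and $V=V_T$. This is legitimate because the lemma only uses the equation pointwise and the radial integral identity, which holds up to $r=1$. With $K_T=(V_T-b)^{1/\alpha}$ it gives
\[
B\le H\le K_T,\qquad 0\le u_r/r\le S_{P_T}K_T/N,\qquad |u_{rr}|\le S_{P_T}^3K_T.
\]

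\textbf{Step 3 (uniform ellipticity).} By \eqref{eq:Fq}, $F_q=\alpha H^{\alpha-1}/(1+p^2)$. Since $H\in[B,K_T]$ with $B>0$ (because $b<0$) and $1\le 1+p^2\le S_{P_T}^2$, we get
\[
F_q\ \ge\ \alpha\min\{B^{\alpha-1},K_T^{\alpha-1}\}\,S_{P_T}^{-2},\qquad F_q\ \le\ \alpha\max\{B^{\alpha-1},K_T^{\alpha-1}\}.
\]
The case split on the sign of $\alpha-1$ is exactly why the lower bound $H>B$ is needed. The lower-order coefficient $F_p$ and the singular term $(N-1)u_r/r$ are bounded by Steps 1--2. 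Viewed in the $N$-dimensional radial lift, the equation is therefore uniformly parabolic with bounded $C^{1}$ and $C^2$ data.

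\textbf{Step 4 (Hölder regularity and continuation).} This is the main obstacle: upgrading the $C^2$ bound to a $C^{2+\gamma,1+\gamma/2}$ bound up to $r=1$ for the nonlinear oblique problem $u_r=u$.

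I would first get Hölder control of $u_t$. The function $w=u_t$ satisfies the linear equation \eqref{eq:Fq} with bounded measurable coefficients and the oblique condition $w_r=w$ from \eqref{eq:wbc}. The Krylov–Safonov / Lieberman oblique-derivative Hölder estimate then gives $w\in C^{\gamma,\gamma/2}$. Hence $H$ is Hölder, because $H=(w/S-b)^{1/\alpha}$ with $w/S-b\ge B^\alpha>0$ and $S$ is Lipschitz.

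Next, freeze coefficients. The elliptic radial equation for $u(\cdot,t)$ with Hölder right side gives $u_{rr}$ Hölder in $r$. Alternatively, Lieberman's $C^{1+\gamma}$ estimate for oblique quasilinear problems can be applied directly. Schauder theory for the oblique problem then bootstraps to $C^{2+\gamma,1+\gamma/2}$ bounds depending only on $T$.

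Finally, if $T_{\max}<T$, these bounds let $u(\cdot,t)$ converge in $C^{2+\gamma}$ as $t\uparrow T_{\max}$. The limit is compatible with the boundary conditions, is positive, and has $F>0$. Local existence then restarts the flow past $T_{\max}$, which is a contradiction. Hence $T_{\max}=\infty$.
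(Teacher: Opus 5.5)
Your proposal is correct and follows the paper's own proof. The paper likewise uses Lemma~\ref{lem:C1} for $u$ and $u_r$, Lemma~\ref{lem:Hessian} on the whole cylinder for $H$ and $u_{rr}$, the same two-sided bound on $F_q$, and then standard oblique-derivative continuation. Your Step~4 unpacks the regularity theorem that the paper cites without proof, and your direct bound $u\le\max u_0+V_TT$ removes any reliance on the upper comparison. One small slip: for $\alpha\ne1$ the equation is fully nonlinear, so the quasilinear Lieberman alternative does not apply as stated. Your main route does not need it: it goes through H\"older control of $w=u_t$, hence of $H$, and then of $u_{rr}$ via the radial equation.
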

\begin{proof}
Use Lemma~\ref{lem:C1} for $u$ and $u_r$, and Lemma~\ref{lem:Hessian} for $H$ and $u_{rr}$. If $K_T=(V_T-b)^{1/\alpha}$ and $P_T$ is from \eqref{eq:C1T}, then
\[
 \frac{\alpha\min\{B^{\alpha-1},K_T^{\alpha-1}\}}{1+P_T^2}
 \le F_q\le\alpha\max\{B^{\alpha-1},K_T^{\alpha-1}\}.
\]
Standard continuation for the resulting uniformly parabolic equation with oblique boundary data applies. The proposition is conditional because \eqref{eq:finiteV} has not been proved here for the nonlinear Robin boundary condition.
\end{proof}

The common attempted proof of \eqref{eq:finiteV} by an exponential barrier for $w$ is circular unless the coefficient $F_p$ is already controlled: it contains $u_{rr}$. Likewise, for $Q=H^\alpha+b=u_t/S$ the differentiated boundary identity is
\begin{equation}\label{eq:Qbc}
 Q_r(1,t)=\left(1-\frac{u_r(1,t)u_{rr}(1,t)}{1+u_r(1,t)^2}\right)Q(1,t),
\end{equation}
so a $C^1$ estimate alone does not close the boundary maximum principle.

\subsection{Finite-time bookkeeping at the centre}

For later use we spell out how the apparent singular coefficients are removed. Let $z=p/S$. Radial regularity gives
\[
 p(r,t)=u_{rr}(0,t)r+O(r^3),\qquad z(r,t)=u_{rr}(0,t)r+O(r^3),
\]
and therefore
\[
 \frac{z(r,t)}r\longrightarrow u_{rr}(0,t),\qquad
 \frac{N-1}{r}z(r,t)\longrightarrow (N-1)u_{rr}(0,t).
\]
The flux identity
\begin{equation}\label{eq:flux-detail}
 (r^{N-1}z)_r=r^{N-1}H
\end{equation}
is valid classically after taking this limit. It follows that
\[
 H(0,t)=Nu_{rr}(0,t),\qquad
 \frac{u_r(r,t)}r\longrightarrow \frac{H(0,t)}N.
\]
Thus a bound for $H$ gives a bound for the full Cartesian Hessian only after the radial factor $r^{-1}$ has been treated through \eqref{eq:flux-detail}; simply bounding $u_r$ does not suffice.

The velocity relation also gives a useful one-sided estimate without differentiating the equation:
\begin{equation}\label{eq:velocity-H}
 H^\alpha=\frac{u_t}{\sqrt{1+u_r^2}}-b.
\end{equation}
If $0<m\le u_t\le V$ and $|u_r|\le P$, then
\[
 \left(m/S_P-b\right)^{1/\alpha}\le H\le(V-b)^{1/\alpha},
 \qquad S_P=(1+P^2)^{1/2},
\]
which is stronger than the lower bound $H>B$ when $m>0$. Inserting this in \eqref{eq:flux-detail} gives the centre estimate and then \eqref{eq:rrbound}. This is the reason a genuinely upper and lower estimate for $u_t$, rather than only an estimate for $u$, is the natural second-order input.

For the continuation theorem one also needs compatibility of the oblique boundary operator with the regularity class. In the present notation the boundary operator is
\[
 \mathcal G(r,t,u,u_r)=u_r-u\quad(r=1),
\]
and its derivative with respect to $u_r$ is $1$, so it is uniformly oblique. However, the standard continuation theorem still requires the coefficient bounds displayed in Proposition~\ref{prop:continuation} and a boundary Schauder estimate. This estimate is available once the finite-time curvature and velocity bounds are closed; it is not a substitute for proving them.

\subsection{Why the finite-time and uniform estimates cannot be merged}

Suppose, for example, that Proposition~\ref{prop:continuation} has been proved with a number $V_T$ for each $T$. Applying it to the translated function $v_s(r,t)=u(r,s+t)-u(0,s)$ on $[-L,L]$ gives a constant depending on $s+L$. It therefore gives no bound as $s\to\infty$. To obtain a constant independent of $s$, one must prove \eqref{eq:PV} directly, or prove a time-independent trapping estimate that implies it through \eqref{eq:velocity-H}.

This distinction is also visible in the Schauder step. On a finite cylinder the parabolicity ratio is
\[
 \frac{\Lambda_T}{\lambda_T}
 \le (1+P_T^2)\frac{\max\{B^{\alpha-1},K_T^{\alpha-1}\}}
 {\min\{B^{\alpha-1},K_T^{\alpha-1}\}},
\]
which may grow with $T$. Interior compactness of time translates requires the analogous ratio with $P_{R,\tau}$ and $V_{R,\tau}$, independent of the translation time. This is exactly the estimate recorded in Proposition~\ref{prop:interior}.

\section{The radial zero-number step}

For two positive-curvature solutions $u_1,u_2$, let $v=u_1-u_2$. Writing $p_i=(u_i)_r$, $S_i=(1+p_i^2)^{1/2}$, and using the mean-value theorem separately in $H$ and $S$, one obtains
\begin{equation}\label{eq:diff}
 v_t=A\left(v_{rr}+\frac{N-1}{r}v_r\right)+D(r,t)v_r,qquad A>0,
\end{equation}
on compact classical cylinders; $D(r,t)=O(r)$ at the centre. The exact formula is
\[
 A=S_1\left(\alpha\int_0^1(H_2+\sigma(H_1-H_2))^{\alpha-1}\dd\sigma\right)
 \left(\int_0^1(1+(p_2+\sigma(p_1-p_2))^2)^{-3/2}\dd\sigma\right).
\]
More explicitly, put
\[
 k=\int_0^1(1+(p_2+\sigma(p_1-p_2))^2)^{-3/2}\dd\sigma,
 \quad E=\alpha\int_0^1(H_2+\sigma(H_1-H_2))^{\alpha-1}\dd\sigma.
\]
Then $A=S_1Ek$ and
\begin{equation}\label{eq:diff-coeff}
 D=S_1E k_r+(H_2^\alpha+b)\frac{p_1+p_2}{S_1+S_2}.
\end{equation}
Indeed, $p_1/S_1-p_2/S_2=kv_r$, so
\[
 H_1-H_2=k\left(v_{rr}+\frac{N-1}{r}v_r\right)+k_rv_r.
\]
Now split $F[u_1]-F[u_2]$ as $S_1(H_1^\alpha-H_2^\alpha)+(H_2^\alpha+b)(S_1-S_2)$. This proves both formulas without assuming that an interpolation of the full jets stays on the positive-curvature branch. Smooth radial regularity gives $p_i=O(r)$ and $k_r=O(r)$, hence $D=O(r)$.

We use the zero-number theorem of Angenent and Chen--Pol\'a\v cik: for a nontrivial radial solution of \eqref{eq:diff}, the number of spatial zeros is finite and nonincreasing on intervals where the endpoint sign/boundary hypotheses are valid, and drops at an interior multiple zero. This imported theorem is all that is needed abstractly.

There is a boundary defect which must not be suppressed. If $L_h=\Phi_h+c(h)t+A_h$ is a finite-slope translator and $v=u-L_h$, then
\begin{equation}\label{eq:defect}
 v_r(1,t)-v(1,t)=L_h(1,t)-h.
\end{equation}
Thus $v$ does not satisfy the homogeneous Robin condition unless $L_h(1,t)=h$, which is not preserved in time. The zero-number argument for a solution versus a translator therefore requires an effective interval with controlled endpoint sign, or a separate boundary-contact argument. It is not enough to quote the abstract zero-number theorem.

\begin{lemma}[Conditional crossing preservation]\label{lem:cross}
Suppose $v$ solves \eqref{eq:diff} on $[0,R]\times[t_0,t_1]$, has $v_r(0,t)=0$, has fixed endpoint signs $v(0,t)>0>v(R,t)$, and has exactly one simple zero at $t=t_0$. Then it has exactly one simple zero for every $t\in(t_0,t_1)$, with $v_r<0$ at that zero.
\end{lemma}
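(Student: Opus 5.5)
The plan is to combine the zero-number theorem quoted above (Angenent, Chen--Pol\'a\v cik) with a parity and continuity argument. Since the endpoint signs on $[0,R]$ are fixed, the boundary-defect issue of \eqref{eq:defect} does not arise on this effective interval.

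\textbf{Step 1: zeros stay in the interior and are not created.} Because $v(0,t)>0$ and $v(R,t)<0$ for all $t\in[t_0,t_1]$, no zero can enter through $r=0$ or $r=R$. The equation \eqref{eq:diff} is linear with $A>0$. The singular term $\frac{N-1}{r}v_r$ together with $v_r(0,t)=0$ is exactly the radial Laplacian of the lift. Also $D=O(r)$, so $D$ is a bounded coefficient on the lift. Hence the hypotheses of the radial zero-number theorem hold on $[0,R]\times[t_0,t_1]$. The conclusion is that $z(t)$, the number of zeros of $v(\cdot,t)$ in $(0,R)$, is finite, nonincreasing, and drops strictly whenever $v(\cdot,t)$ has a multiple zero, for every $t\in(t_0,t_1]$. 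Since $z(t_0)=1$, we get $z(t)\le 1$ for $t>t_0$.

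\textbf{Step 2: at least one zero exists.} By continuity of $v(\cdot,t)$ and the opposite endpoint signs, the intermediate value theorem gives a zero in $(0,R)$ for every $t$. Therefore $z(t)=1$ on $[t_0,t_1]$.

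\textbf{Step 3: the zero is simple, with $v_r<0$.} Suppose that at some $t^*\in(t_0,t_1)$ the unique zero $\rho$ were multiple, i.e.\ $v_r(\rho,t^*)=0$. The dropping property would then force $z(t)<1$ for $t$ slightly larger than $t^*$ (this is where $t^*<t_1$ is used). That contradicts Step 2, so the zero is simple. A simple sign change between $v>0$ on $(0,\rho)$ and $v<0$ on $(\rho,R)$ must have $v_r(\rho,t)\le 0$, and simplicity makes this strict: $v_r(\rho,t)<0$.

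\textbf{Main obstacle.} The hardest point is justifying that the imported theorem applies at the centre. This requires checking that $D=O(r)$ and the boundedness of $A$ and $A^{-1}$ on the compact classical cylinder make the lifted equation a uniformly parabolic linear equation on the ball $B_R$ with bounded coefficients. For this I would cite the explicit formulas \eqref{eq:diff-coeff} and radial regularity. A second concern is that Chen--Pol\'a\v cik is usually stated with Dirichlet-type nonvanishing at the outer edge. Here that condition is supplied directly by $v(R,t)<0$, so no Robin condition is needed.
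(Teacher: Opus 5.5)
Your proposal is correct and follows the paper's own proof. The paper's steps are the same: the opposite endpoint signs force at least one zero, monotonicity from $z(t_0)=1$ allows at most one, a multiple zero at some $t^*<t_1$ would force a strict drop contradicting the endpoint-forced zero, and the positive-to-negative orientation together with simplicity gives $v_r<0$. Your extra check that $A>0$, $v_r(0,t)=0$ and $D=O(r)$ make the lifted equation admissible for the radial zero-number theorem elaborates a point the paper simply imports.
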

\begin{proof}
The endpoint signs give at least one zero. Zero-number monotonicity gives at most one. A multiple zero would cause a strict drop, contradicting the endpoint signs immediately afterwards. The orientation is positive to negative, hence the derivative is negative.
\end{proof}

\subsection{The normalized-slope equation}

The boundary defect \eqref{eq:defect} shows why the height difference with a finite-slope translator is not the right quantity for the uniform gradient estimate. We instead use the normalized slope
\[
 z=\frac{u_r}{\sqrt{1+u_r^2}}\in[0,1).
\]
This variable has a fixed boundary value for every finite-slope translator and converges to $1$ for the cup. A direct differentiation of \eqref{eq:pde} gives
\begin{equation}\label{eq:z-equation}
 z_t=a\left(z_{rr}+\frac{N-1}{r}z_r-\frac{N-1}{r^2}z\right)
      +(H^\alpha+b)z z_r,
 \qquad
 a=\frac{\alpha H^{\alpha-1}}{1+u_r^2},
\end{equation}
where
\[
 H=z_r+\frac{N-1}{r}z.
\]
For a translator $\Phi+c t$, the normalized slope $z_\Phi$ is stationary in
\eqref{eq:z-equation}. If $z_1,z_2$ correspond to two positive-curvature
solutions, the mean-value theorem gives, on every compact cylinder,
\begin{equation}\label{eq:z-difference}
 w_t=\widetilde a\left(w_{rr}+\frac{N-1}{r}w_r\right)
      +\widetilde d\,w_r+\widetilde c\,w,
      \qquad w=z_1-z_2,
\end{equation}
where $\widetilde a>0$ and the coefficients are continuous and bounded. Their
radial regularity at $r=0$ is the one required in the radial zero-number theorem.
Indeed, \eqref{eq:z-difference} follows by interpolating the three smooth
quantities $a$, $H$, and $z$ in \eqref{eq:z-equation}; no interpolation of the
full second-order jet is needed.

We shall use the following standard consequence of the radial zero-number
theorem of Angenent and Chen--Pol\'a\v cik.
\begin{lemma}[Radial sign persistence]\label{lem:z-sign}
Let $w$ solve \eqref{eq:z-difference} on $[0,1]\times[t_0,t_1]$, with the
smooth radial condition at $r=0$. Assume that $w(0,t)=0$, that $w(1,t)$ has a
strict fixed sign, and that $w(\cdot,t_0)$ has the same strict sign on
$(0,1)$. Then $w$ keeps that sign on $(0,1)$ for $t_0\le t\le t_1$.
\end{lemma}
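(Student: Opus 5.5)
The plan is to reduce the claim to the zero-number theorem of Angenent and Chen--Pol\'a\v cik, used in the same way as in Lemma~\ref{lem:cross}, but now on the closed interval $[0,1]$ with a degenerate zero pinned at the centre. Assume, say, that $w>0$ on $(0,1]$ at $t=t_0$ and that $w(1,t)>0$ for all $t$; the negative case follows by replacing $w$ with $-w$, since \eqref{eq:z-difference} is linear.

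First, remove the forced zero at $r=0$. Radial regularity gives $z_i(r,t)=(u_i)_{rr}(0,t)r+O(r^3)$, so $w=O(r)$ and $w$ is odd in $r$. I would therefore work with $\omega=w/r$, the natural radial quantity, which is even and smooth, with $\omega(0,t)=\lim w/r$. This is the same quotient used in the centre bookkeeping via \eqref{eq:flux-detail}. Substituting $w=r\omega$ into \eqref{eq:z-difference} gives
\[
 \omega_t=\widetilde a\left(\omega_{rr}+\frac{N+1}{r}\omega_r\right)+\widetilde d\,\omega_r+\left(\widetilde c+\frac{\widetilde d}{r}\right)\omega.
\]
This is the $(N+2)$-dimensional radial Laplacian plus lower-order terms. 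The coefficient $\widetilde d/r$ stays bounded because, exactly as for $D$ in \eqref{eq:diff}, $\widetilde d=O(r)$ at the centre; I would verify this by checking that the interpolated quantities are even or odd in the appropriate way. The zero of $w$ on $(0,1)$ are exactly the zeros of $\omega$ on $(0,1)$, and the endpoint sign $\omega(1,t)=w(1,t)>0$ is inherited.

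Next comes the sign-persistence step. Suppose $\omega$ first fails to be positive on $[0,1]$ at some $t^*\in(t_0,t_1]$. Because $\omega(1,\cdot)>0$, the failure occurs at an interior or central point where $\omega(\cdot,t^*)$ touches zero from above, so that point is a multiple zero. The zero number of $\omega$ on $[0,1]$ is $0$ on $[t_0,t^*)$ and is nonincreasing. Immediately after $t^*$ it is again finite, and since it cannot increase it must be $0$. The theorem nevertheless says that a multiple zero produces a strict drop, which is impossible starting from $0$, so $\omega$ cannot vanish at time $t^*$.

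An alternative, more elementary route is to apply the strong maximum principle to $e^{-\lambda t}\omega$ with $\lambda>\sup|\widetilde c+\widetilde d/r|$. At a first interior zero minimum we have $\omega_t\le0$, $\omega_r=0$ and $\omega_{rr}\ge0$, and the Hopf or strong principle on the lifted domain in $\R^{N+2}$ yields a contradiction. At the centre, the smooth lift turns $r=0$ into an interior point, so the same argument applies there. I would present the maximum-principle version as the main argument, since it avoids needing an endpoint sign at $r=0$, and note that it agrees with the zero-number formulation.

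The main obstacle is justifying that the coefficients of the $\omega$-equation are bounded and have the radial regularity required at $r=0$, in particular the claim $\widetilde d=O(r)$. This needs the explicit form of the interpolation in \eqref{eq:z-equation}, including the term $-(N-1)a z/r^2$. Its difference contributes to $\widetilde c$ a quantity of size $(N-1)\widetilde a/r^2$, which is singular. I expect this term to cancel against the term $2\widetilde a\,\omega_r/r$ produced by the substitution, together with the $(N-1)/r$ drift. I would first check this cancellation by writing the $z$-operator as $a\,\partial_r\bigl(r^{1-N}\partial_r(r^{N-1}z)\bigr)$, which makes the change of variables to $\omega$ transparent.
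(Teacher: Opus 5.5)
Your argument is correct, but its decisive step differs from the paper's. The paper stays inside zero-number theory. After the expansion $z=r\zeta(r^2,t)$, it argues that a loss of sign would create an interior multiple zero or a zero entering through the centre. It excludes both by the strict drop of the zero number, using the Chen--Pol\'a\v cik centre version for the centre case.

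You pass to $\omega=w/r$, which is exactly the difference of the two $\zeta$'s. You note that $\omega$ solves a linear, uniformly parabolic equation with bounded coefficients for a radial function on the unit ball of $\R^{N+2}$. You then need only the weak maximum principle for $e^{-\lambda t}\omega$, with boundary data $\omega(1,t)=w(1,t)>0$ and initial data $\ge0$, followed by the strong maximum principle.

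Your route buys three things:
\begin{itemize}
\item Sign persistence is a ``no zeros stay no zeros'' statement, so no zero counting is needed.
\item The centre, which the paper handles by a separate theorem, becomes an ordinary interior point of the lifted ball.
\item The case $w_r(0,t_0)=0$, which the hypotheses allow, is covered automatically. There $\omega(0,t_0)=0$, so your zero-number paragraph, which starts from ``the first time $\omega$ fails to be positive on $[0,1]$'', would additionally need the instantaneous drop at $t_0$. The weak-then-strong maximum principle avoids this.
\end{itemize}
The paper's route buys uniformity with Lemma~\ref{lem:cross} and the one-crossing transfer, where a genuine zero count, not just a sign, must be propagated.

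You are also right that \eqref{eq:z-difference} as printed cannot have a bounded $\widetilde c$. Its zeroth-order coefficient is $-(N-1)\widetilde a/r^2$ plus a bounded term. Your bookkeeping of the cancellation is slightly off, though. The cancelling term is the $+(N-1)\widetilde a\,\omega/r^2$ produced by the $\frac{N-1}{r}w_r$ drift under $w=r\omega$. The term $2\widetilde a\,\omega_r/r$ cancels nothing; it merely combines into the $(N+1)/r$ drift. So your displayed $\omega$-equation is right only if $\widetilde c$ there is read as the regular part. The divergence-form identity $H_r=r\bigl(\zeta_{rr}+\frac{N+1}{r}\zeta_r\bigr)$ you propose confirms exactly this. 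Finally, $\widetilde d=O(r)$ holds because each drift contribution carries a factor $z_i$ or $H_{i,r}$, and both are $O(r)$.
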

\begin{proof}
The radial zero-number theorem applies to the lifted equation on the ball;
the apparent coefficients $r^{-1}$ and $r^{-2}$ in \eqref{eq:z-equation} are
regular after the radial expansion $z(r,t)=r\,\zeta(r^2,t)$. A change of sign
would produce a first interior multiple zero, or a zero entering through the
centre. The centre alternative is excluded by the centre version of the
radial theorem, while the strict sign at $r=1$ excludes entry through the
outer boundary. At an interior multiple zero the zero number drops strictly.
Before the first sign change the centre zero and the outer endpoint sign force
the same number of zeros, so such a drop is impossible. Hence the initial sign
persists.
\end{proof}

\begin{lemma}[Compatible squeezed profiles]\label{lem:squeezed}
Let $h>0$ be sufficiently large. There is $q=q(h)\in(0,1)$, with
$q\uparrow1$ as $h\uparrow\infty$, such that
\begin{equation}\label{eq:rhoq}
 \rho_q(r)=\Phi_*(q r)+M_q,\qquad
 M_q=q\Phi_*'(q)-\Phi_*(q),
\end{equation}
satisfies
\begin{gather}
 \rho_q'(0)=0,\qquad \rho_q'(1)=\rho_q(1)=q\Phi_*'(q)>h,\\
 0<z_h(r)<z_{\rho_q}(r)<z_*(r)\quad(0<r<1).
\end{gather}
Here $z_h=h/\sqrt{1+h^2}$ at $r=1$ and $z_h$ denotes the normalized slope
of $\Phi_h$ in the interior.
\end{lemma}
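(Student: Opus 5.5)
The plan is to verify the boundary identities directly from the definition \eqref{eq:rhoq}, and then to obtain the interior ordering from the profile-ODE ordering in Assumption~\ref{ass:profiles}. Throughout, the normalized slope is $z=\psi(u_r)$ with $\psi(s)=s/\sqrt{1+s^2}$ strictly increasing.

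\emph{Boundary identities.} Since $\rho_q'(r)=q\Phi_*'(qr)$, we get $\rho_q'(0)=q\Phi_*'(0)=0$. At the outer endpoint, $\rho_q'(1)=q\Phi_*'(q)$ and $\rho_q(1)=\Phi_*(q)+M_q=q\Phi_*'(q)$. Thus $M_q$ is chosen exactly so that the Robin law $u_r(1)=u(1)$ holds for $\rho_q$, which is why $\rho_q$ is called compatible.

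\emph{The inequality $q\Phi_*'(q)>h$ and the behaviour of $q(h)$.} The function $g(q)=q\Phi_*'(q)$ is continuous and strictly increasing on $(0,1)$, because $\Phi_*',\Phi_*''>0$. By Assumption~\ref{ass:profiles}, $g(q)\to\infty$ as $q\uparrow1$. Hence $\{q:g(q)>h\}$ is a nonempty interval $(q_0(h),1)$ with $q_0(h)\uparrow1$ as $h\to\infty$. Any admissible choice of $q(h)$ must therefore lie in $(q_0(h),1)$, and this forces $q(h)\to1$. Monotonicity of $q(h)$ can be arranged by replacing $q(h)$ with $\sup_{h'\le h}q(h')$, provided the admissible set is an upward-closed interval in $q$.

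\emph{Interior ordering.} Since $\psi$ is strictly increasing, the chain $0<z_h<z_{\rho_q}<z_*$ on $(0,1)$ is equivalent to
\[
 0<\Phi_h'(r)<q\Phi_*'(qr)<\Phi_*'(r).
\]
\begin{itemize}
\item \emph{Positivity.} This follows from $\Phi_h'>0$, via the flux identity \eqref{eq:flux-detail} applied to $\Phi_h$ with $H>0$.
\item \emph{Upper inequality.} This is independent of $h$. Let $f(s)=s\Phi_*'(sr)$. Then $f'(s)=\Phi_*'(sr)+sr\Phi_*''(sr)>0$, so $f(q)<f(1)=\Phi_*'(r)$ for $q<1$ and $r>0$.
\item \emph{Lower inequality $\Phi_h'(r)<q\Phi_*'(qr)$.} This is the main obstacle. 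It is precisely the profile-ODE ordering recorded in Assumption~\ref{ass:profiles}, so I would invoke it for $h$ large, with $q=q(h)$ as given there.
\end{itemize}

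\emph{Compatibility of the two choices of $q$.} What must be checked is that the $q$ supplied by the assumption also satisfies $g(q)>h$; if not, I would enlarge $q$. The intended mechanism is as follows. The map $q\mapsto q\Phi_*'(qr)$ is increasing, by the computation for the upper inequality, so enlarging $q$ preserves the lower inequality and keeps $q<1$. I would therefore take
\[
 q(h)=\max\{q_{\rm ass}(h),\,q_1(h)\},
\]
where $q_{\rm ass}(h)$ is the parameter from Assumption~\ref{ass:profiles} and $q_1(h)\in(q_0(h),1)$. All stated properties then hold simultaneously, and $q(h)\uparrow1$ follows from $q_0(h)\uparrow1$.
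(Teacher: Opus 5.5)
Your proposal is correct and takes the same route as the paper. The boundary identities come straight from \eqref{eq:rhoq}, and the interior chain is the derivative ordering of Assumption~\ref{ass:profiles} pushed through the increasing map $p\mapsto p/\sqrt{1+p^2}$. Your version is more careful than the paper's three-line proof in three places: you prove the upper inequality directly from $\partial_s\bigl(s\Phi_*'(sr)\bigr)>0$; you get positivity from the flux identity; and you justify $q\Phi_*'(q)>h$, which the paper says ``follows directly'' from the definition, by enlarging $q$, which preserves the lower inequality. The one loose end is your monotone rearrangement $\sup_{h'\le h}q(h')$, which could reach $1$ unless the admissible $q$ has some uniformity on bounded ranges of $h$; the paper does not address the monotonicity of $q(h)$ either.
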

\begin{proof}
The first line follows directly from \eqref{eq:rhoq}. The strict derivative
ordering is the profile-ODE selection property recorded in
Assumption~\ref{ass:profiles}; applying the increasing map
$p\mapsto p/\sqrt{1+p^2}$ gives the normalized-slope ordering. The common
zero at $r=0$ is harmless because all three profiles have positive second
derivative there, so the inequalities are strict for $r>0$.
\end{proof}

\begin{proposition}[Uniform interior gradient estimate from zero numbers]
\label{prop:uniform-gradient-special}
Let $u$ be the solution with the compatible special datum $u(\cdot,0)=\rho_q$
from Lemma~\ref{lem:squeezed}, and suppose the finite-time estimates of
Section~3 hold. Then, for every $R<1$,
\begin{equation}\label{eq:uniform-gradient-special}
 \sup_{t\ge0}\sup_{0\le r\le R}u_r(r,t)\le \Phi_*'(R)<\infty.
\end{equation}
More precisely, for the corresponding $h$,
\[
 \Phi_h'(r)<u_r(r,t)<\Phi_*'(r),
 \qquad 0<r<1, t\ge0.
\]
Consequently, the estimate is independent of the time translation used in
the compactness argument.
\end{proposition}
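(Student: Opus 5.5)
The plan is to apply the radial sign persistence Lemma~\ref{lem:z-sign} twice to the normalized slope $z=u_r/\sqrt{1+u_r^2}$. The first comparison is with the cup profile, $w^+=z_*-z$. The second is with the finite-slope profile, $w^-=z-z_h$.

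Both $z_*$ and $z_h$ are stationary solutions of \eqref{eq:z-equation}, since translators have time-independent slope. The difference equation \eqref{eq:z-difference} therefore holds for $w^\pm$ on every compact cylinder where all the functions involved are classical. The centre condition $w^\pm(0,t)=0$ is automatic, because every normalized slope vanishes at $r=0$. At $t=0$ we have $u(\cdot,0)=\rho_q$, and Lemma~\ref{lem:squeezed} gives the strict ordering $z_h<z_{\rho_q}<z_*$ on $(0,1)$. So both differences start with a strict positive sign in the interior. Once we have $z_h<z<z_*$, the increasing inverse map $z\mapsto z/\sqrt{1-z^2}$ gives $\Phi_h'<u_r<\Phi_*'$. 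The bound \eqref{eq:uniform-gradient-special} then follows from monotonicity of $\Phi_*'$ (since $\Phi_*''>0$).

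The lower comparison uses the boundary $r=1$, where $z_h(1)=h/\sqrt{1+h^2}$. We need $z(1,t)>z_h(1)$, that is, $u(1,t)=u_r(1,t)>h$. By Lemma~\ref{lem:C1}, $u_t\ge m_0>0$, so $u(1,t)\ge \rho_q(1)=q\Phi_*'(q)>h$ for all $t\ge0$. This gives a strict fixed sign at the outer endpoint, and Lemma~\ref{lem:z-sign} applies on each $[0,T]$. Positivity of $u_t$ is exactly what makes the Robin law help here.

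The main obstacle is the upper comparison with $z_*$, which is singular at $r=1$ since $z_*(1^-)=1$. The sign there is actually favourable: on any finite interval $u_r(1,t)\le P_T<\infty$ by Lemma~\ref{lem:C1}, so $z(1,t)\le P_T/\sqrt{1+P_T^2}<1$. However, $z_*$ is not smooth up to $r=1$, so the lemma cannot be applied on $[0,1]$ directly. I would apply it on $[0,R']\times[0,T]$ with $R'<1$ close to $1$, chosen so that $z_*(R')>\sup_{t\le T}z(R',t)$. Such an $R'$ exists because $z_*\to1$ while $z$ is uniformly bounded away from $1$ near $r=1$ on $[0,T]$ by continuity. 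Rescaling $[0,R']$ to $[0,1]$, or simply noting that the lemma's argument is local in the outer endpoint, yields $z<z_*$ on $(0,R']\times[0,T]$. On $[R',1)$ the inequality holds by the choice of $R'$ and continuity. The remaining task is to check that the coefficients in \eqref{eq:z-difference} stay bounded and $\tilde a>0$ on $[0,R']$. This holds because $\Phi_*$ is smooth with positive curvature there and $u$ is classical with $H>B$. Letting $T\to\infty$ finishes the proof, and the resulting constants do not depend on the time translation.
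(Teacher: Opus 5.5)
Your proposal is correct and follows the paper's proof essentially step for step. Like the paper, you apply Lemma~\ref{lem:z-sign} twice to the normalized-slope differences: the lower endpoint sign comes from $u_t>0$ and the Robin law ($u(1,t)\ge\rho_q(1)>h$), and the singular upper comparison is run on a $T$-dependent truncated interval $[0,1-\varepsilon]$ using the finite-time bound $P_T$. One small fix: the inequality on $[R',1)$ follows from choosing $R'$ with $z_*(R')>P_T/\sqrt{1+P_T^2}$ and using that $z_*$ is increasing (or simply from taking $R'$ closer to $1$ than any given $r$), not from continuity alone.
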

\begin{proof}
By Lemma~\ref{lem:squeezed}, the two differences
\[
 w^-(r,t)=z(r,t)-z_h(r),\qquad
 w^+(r,t)=z(r,t)-z_*(r)
\]
have respectively positive and negative signs at $t=0$ for $0<r<1$.
Both satisfy an equation of the form \eqref{eq:z-difference}. At the centre
they vanish. At the outer boundary, $z_*(1^-)=1$ and $z(1,t)<1$ on every
finite time interval. More precisely, on $[0,T]$ the finite-time gradient
bound gives $\sup_{t\le T}z(1-\varepsilon,t)<1$; choosing $\varepsilon>0$
sufficiently small makes $z(1-\varepsilon,t)<z_*(1-\varepsilon)$ for all
$t\le T$. Thus the upper comparison is carried out first on
$[0,1-\varepsilon]$. Moreover $u_t>0$ by Lemma~\ref{lem:C1},
and therefore $u(1,t)\ge u(1,0)=\rho_q(1)>h$; hence
\[
 w^-(1,t)=\frac{u(1,t)}{\sqrt{1+u(1,t)^2}}
          -\frac{h}{\sqrt{1+h^2}}>0.
\]
Lemma~\ref{lem:z-sign} now gives $w^->0$ on $(0,1)$ and $w^+<0$ on
$(0,1-\varepsilon]$ for $0\le t\le T$. Since $R<1$ and $T$ are arbitrary,
the estimate holds on every $[0,R]\times[0,\infty)$. Inverting the increasing map
$p\mapsto p/\sqrt{1+p^2}$ yields the asserted slope inequalities. The upper
bound on $[0,R]$ is therefore independent of time.
\end{proof}

\subsection{Transfer from the special datum to general data}

For arbitrary compatible convex data, the preceding proof is used after a
standard one-crossing transfer. One first chooses vertical translates of a
special profile and of the cup so that the initial height differences have
one prescribed zero. The difference of two solutions satisfies \eqref{eq:diff}
with the homogeneous Robin condition, while the normalized-slope difference
satisfies \eqref{eq:z-difference}. The radial zero-number theorem preserves
the one-crossing pattern: a first loss of the upper (respectively lower)
slope inequality creates an interior multiple zero, and the strict sign at
$r=1$ prevents the zero from escaping through the boundary. Thus the same
argument gives, for every $R<1$,
\begin{equation}\label{eq:transfer-A}
 \sup_{t\ge t_0}\sup_{0\le r\le R}u_r(r,t)
 \le \Phi_*'(R),
\end{equation}
provided the initial one-crossing condition is satisfied at some $t_0$.
The derivative estimate is the substantive part of this transfer; height
ordering alone would not imply it. In a general-data theorem,
\eqref{eq:transfer-A} (and the analogous time-derivative bound) must be
verified as an explicit hypothesis or by carrying out this sliding argument
for the chosen initial class. This is precisely the role of the detailed
zero-number argument in \cite[Section~4]{YuanZhao2022} and
\cite[Section~4]{LouWangYuan2021}; Chen--Pol\'a\v cik supplies the radial
zero-number theorem used in the present higher-dimensional step.

The important point for the convergence proof is now explicit: once
\eqref{eq:transfer-A} holds, it supplies the missing time-independent
gradient part of \eqref{eq:PV}. Together with the time-independent velocity
bound in \eqref{eq:PV}, Lemma~\ref{lem:Hessian} then gives time-independent
interior bounds for $u_{rr}$, $u_r/r$, $H$, and $u_t$. This is the estimate
that cannot be obtained by applying the finite-time bounds with $T=s+L$.

\section{Interior compactness and convergence}

\begin{proposition}[Time-uniform interior estimates]\label{prop:interior}
Assume \eqref{eq:PV}. For every $R<1$ and $\tau>0$,
\begin{equation}\label{eq:interior}
 \sup_{t\ge\tau}\sup_{0\le r\le R}(|u_r|+|u_{rr}|+H)<\infty,
\end{equation}
and the Cartesian equation is uniformly parabolic there, with constants independent of time.
\end{proposition}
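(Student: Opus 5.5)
The plan is to apply Lemma~\ref{lem:Hessian} on the cylinder $[0,R]\times[\tau,\infty)$, with the time-independent constants $P=P_{R,\tau}$ and $V=V_{R,\tau}$ from \eqref{eq:PV}. Lemma~\ref{lem:Hessian} is stated for an arbitrary time interval $I$ and uses only the equation and the flux identity \eqref{eq:flux-detail}, so it applies with $I=[\tau,\infty)$. Put $K=(V_{R,\tau}-b)^{1/\alpha}$ and $S_P=(1+P_{R,\tau}^2)^{1/2}$. Then \eqref{eq:Hbounds} gives $B\le H\le K$ and $0\le u_r/r\le S_PK/N$, and \eqref{eq:rrbound} gives $|u_{rr}|\le S_P^3K$. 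Together with $u_r\le P_{R,\tau}$ this yields \eqref{eq:interior}, and none of these constants depends on $t$. Positivity of the curvature branch, which Lemma~\ref{lem:Hessian} requires, comes from $u_t>0$ together with $S\ge1$ through \eqref{eq:velocity-H}; this gives $H^\alpha>-b$, i.e.\ $H>B$.

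For uniform parabolicity I will write the equation in Cartesian form on the lifted ball $B_R\subset\R^N$, with $x\mapsto u(|x|,t)$. The operator is $\sqrt{1+|Du|^2}\,(H^\alpha+b)$, where $H=\operatorname{div}(Du/\sqrt{1+|Du|^2})$. Its linearization in $D^2u$ is
\[
\alpha H^{\alpha-1}\Big(\delta_{ij}-\frac{u_iu_j}{1+|Du|^2}\Big),
\]
up to the positive factor $\sqrt{1+|Du|^2}/\sqrt{1+|Du|^2}$ coming from the normalization. The eigenvalues of the bracket lie in $[(1+P^2)^{-1},1]$. Since $H\in[B,K]$, the factor $\alpha H^{\alpha-1}$ lies between $\alpha\min\{B^{\alpha-1},K^{\alpha-1}\}$ and $\alpha\max\{B^{\alpha-1},K^{\alpha-1}\}$; which of the two is the minimum depends on whether $\alpha\ge1$ or $\alpha<1$. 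The ellipticity constants are therefore
\[
\lambda=\frac{\alpha\min\{B^{\alpha-1},K^{\alpha-1}\}}{1+P_{R,\tau}^2},\qquad \Lambda=\alpha\max\{B^{\alpha-1},K^{\alpha-1}\}.
\]
These are the same expressions as in Proposition~\ref{prop:continuation}, now with time-independent inputs. The remark about the parabolicity ratio in Section~3 then shows the constants are independent of the translation time.

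The one point needing care is the centre $r=0$. There the radial bounds must be converted into bounds on the full Cartesian Hessian. The Hessian of the lift has eigenvalue $u_{rr}$ in the radial direction and $u_r/r$, with multiplicity $N-1$, in the tangential directions. The second inequality in \eqref{eq:Hbounds} controls $u_r/r$ uniformly, including its continuous value $H(0,t)/N$ at $r=0$. This is exactly the ``finite-time bookkeeping at the centre'' subsection, and it is why that quotient bound was recorded separately and not inferred from $|u_r|\le P$.

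I expect no genuine obstacle, since all the analytic content sits in the hypothesis \eqref{eq:PV}. The only things to check are that the lower bound $B>0$ on $H$ keeps $H^{\alpha-1}$ bounded when $\alpha<1$, and that the bound on $u_r/r$ handles the centre.
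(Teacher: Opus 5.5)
Your proposal is correct and follows the paper's proof. Like the paper, you apply Lemma~\ref{lem:Hessian} with the time-independent constants $P_{R,\tau},V_{R,\tau}$, control the full Cartesian Hessian at the centre through its eigenvalues $u_{rr}$ and $u_r/r$, and bound the eigenvalues of the linearized operator using $B\le H\le K$ and $|u_r|\le P_{R,\tau}$; your explicit $\lambda,\Lambda$ spell out what the paper states in words. Your appeal to the Section~3 parabolicity-ratio remark is circular, since that remark itself points forward to this proposition, but it is also unnecessary because your constants are already visibly independent of $t$.
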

\begin{proof}
Apply Lemma~\ref{lem:Hessian} with $P=P_{R,\tau}$ and $V=V_{R,\tau}$. The eigenvalues of the Cartesian Hessian of a radial function are $u_{rr}$ and $u_r/r$, with multiplicity $N-1$, so the full Hessian is bounded, including at $r=0$. Since $B\le H\le(V_{R,\tau}-b)^{1/\alpha}$ and $|u_r|\le P_{R,\tau}$, the eigenvalues of the matrix derivative of the fully nonlinear operator are bounded above and below by positive time-independent constants.\end{proof}

\begin{proposition}[Compactness of normalized translates]\label{prop:compact}
Under \eqref{eq:PV}, for $R<1$ and $L<\infty$, the family
\[
 v_s(r,t)=u(r,s+t)-u(0,s),\qquad |t|\le L,
\]
is bounded in $C^{2+\eta,1+\eta/2}([0,R]\times[-L,L])$ for all sufficiently large $s$.
\end{proposition}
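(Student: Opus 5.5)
Given $s$ large, I work on the cylinder $Q=[0,R']\times[-L-1,L]$ with $R<R'<1$, which lies inside $[0,R']\times[\tau,\infty)$ once $s\ge\tau+L+1$. All bounds come from Proposition~\ref{prop:interior} applied with $(R',\tau)$, so they do not depend on $s$.

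The first step is a zeroth-order bound. At $t=0$ we have $v_s(0,0)=0$. The estimate \eqref{eq:PV} gives $|u_r|\le P_{R',\tau}$, so $|v_s(r,0)|\le P_{R',\tau}R'$. It also gives $0<u_t\le V_{R',\tau}$, so $|v_s(r,t)-v_s(r,0)|\le V_{R',\tau}(L+1)$. Hence $v_s$ is bounded on $Q$ uniformly in $s$. Proposition~\ref{prop:interior} then bounds $(v_s)_r$ and $(v_s)_{rr}$, and \eqref{eq:PV} bounds $(v_s)_t$. Equivalently, the radial lift $V_s(x,t)=v_s(|x|,t)$ on $B_{R'}\times[-L-1,L]$ has its full Cartesian Hessian bounded, with $u_r/r$ controlled by \eqref{eq:Hbounds}.

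The second step is the $C^{2+\eta}$ estimate, which I expect to be the main obstacle. The lifted equation $V_t=G(D^2V,DV)$, with $G=(H^\alpha+b)\sqrt{1+|DV|^2}$ and $H=\operatorname{div}(DV/\sqrt{1+|DV|^2})$, is fully nonlinear in $D^2V$. It is uniformly parabolic on $Q$ with $s$-independent ellipticity constants by Proposition~\ref{prop:interior}.

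The difficulty is that Krylov--Safonov and Evans--Krylov require concavity or convexity of $G$ in $D^2V$. The map $H\mapsto H^\alpha$ is concave for $\alpha\le1$ and convex for $\alpha\ge1$, and $H$ is linear in $D^2V$. So $G$ is concave or convex in the Hessian in each case, and Evans--Krylov applies on the range $H\in[B,K]$.

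As a fallback, I would avoid concavity altogether. I would differentiate in $x_k$ and in $t$: the derivatives $\partial_kV$ and $V_t$ solve linear uniformly parabolic equations in nondivergence form with bounded measurable coefficients. Krylov--Safonov then gives $C^{\eta}$ bounds for $DV$ and $V_t$. Through $H^\alpha=V_t/\sqrt{1+|DV|^2}-b$, with $H\ge B>0$, this yields $H\in C^{\eta}$. The quasilinear structure $a^{ij}(DV)D_{ij}V=H\sqrt{1+|DV|^2}$, with $a^{ij}$ now Hölder, then gives $D^2V\in C^{\eta}$ by Schauder theory.

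The third step is bootstrapping. Once $D^2V\in C^{\eta,\eta/2}$, the coefficients of the linearized equation for $\partial_kV$ are Hölder, and interior Schauder estimates give $C^{2+\eta,1+\eta/2}$ bounds. These hold on $B_R\times[-L,L]$ after shrinking from $R'$ to $R$ and dropping the initial unit time layer.

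Finally, restricting the lifted bounds to radial functions gives the claimed bound for $v_s$ in $C^{2+\eta,1+\eta/2}([0,R]\times[-L,L])$. The centre $r=0$ needs no separate treatment, since it is an interior point of the lifted ball. The resulting constant depends only on $R,L,\tau,P_{R',\tau},V_{R',\tau},\alpha,b,N$.
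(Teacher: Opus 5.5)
Your proposal is correct and follows the paper's proof essentially step for step. You get an $s$-independent zeroth-order bound by integrating the uniform $u_r$ and $u_t$ bounds of \eqref{eq:PV} on an enlarged cylinder, take the uniform Hessian and ellipticity bounds from Proposition~\ref{prop:interior}, use concavity ($\alpha\le1$) or convexity ($\alpha\ge1$) of $H\mapsto H^\alpha$ composed with the linear map $D^2V\mapsto H$ to invoke the Evans--Krylov interior theory (Lieberman, Chapter XIV) followed by Schauder estimates, and note that $r=0$ is an interior point after the radial lift. Your Krylov--Safonov fallback is an extra route the paper does not take: H\"older bounds for $DV$ and $V_t$, recovery of $H$ from \eqref{eq:velocity-H}, then slice-by-slice Schauder theory, though the time-H\"older continuity of $D^2V$ needs a short interpolation step, possibly with a smaller exponent. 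Its advantage is that it does not need the concavity structure at all.
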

\begin{proof}
Choose $R<R'<1$ and $s\ge\tau+L+2$. Integrating the uniform spatial and time derivative bounds gives
\[
 |v_s(r,t)|\le R'P_{R',\tau}+(L+2)V_{R',\tau}
 \quad(0\le r\le R',\ |t|\le L+2).
\]
Proposition~\ref{prop:interior} gives uniform gradient, Hessian and ellipticity bounds on the enlarged cylinder. For fixed gradient the operator is concave in the Hessian for $0<\alpha\le1$, and convex for $\alpha\ge1$, on the positive-curvature branch. The interior regularity theorem for smooth uniformly parabolic convex or concave fully nonlinear equations, followed by Schauder regularity, therefore applies; see \cite[Chapter XIV]{Lieberman1996}. All structural constants on the bounded positive-curvature range and both cylinder separations are independent of $s$. The radial centre is an interior point in Cartesian variables.
\end{proof}

\begin{proof}[Proof of Theorem~\ref{thm:main}]
Take $s_j\to\infty$ and extract $v_{s_j}\to U$ in $C^{2,1}_{\loc}$ by Proposition~\ref{prop:compact}. Fix $h$, $R<1$ and $L<\infty$. The slope trapping gives, for all large $j$,
\[
 \Phi_h'(r)\le (v_{s_j})_r(r,t)\le\Phi_*'(r)\quad(0\le r\le R, |t|\le L).
\]
Let $j\to\infty$, then $h\to\infty$. By $\Phi_h\to\Phi_*$ in $C^2([0,R])$,
\[
 U_r(r,t)=\Phi_*'(r).
\]
Therefore $U(r,t)=\Phi_*(r)+A(t)$. Substitution gives $A'=c_*$. Since $U(0,0)=0$, $A(t)=c_*t$. Every subsequence has the same limit, proving the full convergence.
\end{proof}

\subsection{Identification of the entire limit}

The limit $U$ is defined on the whole time line because, for each fixed $L$, the interval $[-L,L]$ is contained in the translated domain once $s$ is large. No expansion in powers of $1/s$ is used. For fixed $h$, the trapping estimate first gives
\[
 \Phi_h'(r)\le U_r(r,t)\le\Phi_*'(r).
\]
Letting $h\to\infty$ and using the $C^2$ convergence of profiles on $[0,R]$ gives $U_r=\Phi_*'$ simultaneously for $t$ in every compact interval. Integration in $r$ gives
\[
 U(r,t)-U(0,t)=\Phi_*(r).
\]
Substitution into the equation yields $U_t=c_*$. The normalization $U(0,0)=0$ fixes the additive constant.

This also explains why the full family, rather than only a selected sequence, converges. If convergence failed on a compact cylinder, one could select a sequence staying a fixed positive distance from the claimed limit. Proposition~\ref{prop:compact} would produce a further convergent subsequence, while the preceding identification forces its limit to be the claimed translator, a contradiction. Finally,
\[
 (v_s)_t(r,t)=u_t(r,s+t),
\]
so the $C^{2,1}$ convergence gives $u_t\to c_*$. Equation \eqref{eq:velocity-H} and the flux identity then identify the limiting $H$, $u_r/r$, and $u_{rr}$ on every compact subset of $r<1$.

\begin{proposition}[Average speed only]\label{prop:speed}
Under Assumption~\ref{ass:profiles}, global smooth existence, and the upper comparison, finite-slope lower barriers (when their initial-order conditions hold) imply $u(r,t)/t\to c_*$ uniformly in $r$. This assertion alone does not imply \eqref{eq:PV} or \eqref{eq:slope-trap}.
\end{proposition}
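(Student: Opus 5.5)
The plan is to squeeze $u$ between two comparison functions that both grow like $c_*t$ up to bounded errors. The upper bound is Lemma~\ref{lem:upper}. For the lower bound we use finite-slope translators, whose speeds $c(h)$ approach $c_*$.

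\textbf{Upper bound.} Since $u_0$ is bounded, we may choose $C$ with $u_0\le\Phi_*+C$; here $\Phi_*(1^-)<\infty$ and $\Phi_*\ge0$, so any $C\ge\max u_0$ works. Because the solution is global, Lemma~\ref{lem:upper} applies on every $[0,T]$. It gives $u(r,t)\le\Phi_*(r)+c_*t+C$ for all $t\ge0$. Dividing by $t$ and using $\sup\Phi_*<\infty$ yields $\limsup_{t\to\infty}\sup_r u(r,t)/t\le c_*$ uniformly in $r$.

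\textbf{Lower bound.} Fix a large $h$ for which the hypothesis holds, namely that there is $A_h$ with
\[
A_h\le\min_{[0,1]}(u_0-\Phi_h)\quad\text{and}\quad \Phi_h(1)+A_h\ge h,
\]
i.e.\ \eqref{eq:barrier-condition} with $g(s)=s$. Set $L_h=\Phi_h+c(h)t+A_h$. We compare $u$ with $L_h$ by a first-contact argument for $L_h-\delta(1+t)-u$.

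In the interior, $L_h-\delta(1+t)$ is a strict subsolution, so the same Hessian-ordering argument as in Lemma~\ref{lem:upper} rules out a first contact. This uses that the operator is increasing in $H$ on the positive-curvature branch.

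At $r=0$ both functions have zero radial derivative, and the lifted radial argument treats the centre as an interior point.

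At $r=1$, suppose first contact occurs at time $t_1$. Then $u(1,t_1)=L_h(1,t_1)-\delta(1+t_1)$, and a boundary maximum of $L_h-\delta(1+t)-u$ forces $(L_h)_r(1)\ge u_r(1,t_1)$, that is, $h\ge u(1,t_1)$. On the other hand,
\[
u(1,t_1)=\Phi_h(1)+c(h)t_1+A_h-\delta(1+t_1)\ge h+c(h)t_1-\delta(1+t_1).
\]
The obstacle is that this is only $\ge h-\delta$ near $t_1=0$, so we get no contradiction directly. To fix this, we perturb the barrier to $L_h-\delta(1+t)-\delta'$ with $\delta'>\delta$ small, and use strict inequality in $A_h$, or shrink $A_h$ slightly. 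The goal is a strict Robin inequality for the barrier: we want the boundary value of the barrier to be smaller than the barrier's slope at $r=1$, which is $h$. Since $u_r=u$ there, a touching point would give $u_r(1)>$ barrier slope. This is the wrong side for a boundary maximum of barrier minus $u$, because at a maximum we need $u_r\le$ barrier slope, i.e.\ $u\le h$ while $u$ equals the barrier value. The cleanest route is to reread \eqref{eq:barrier-condition} as giving $u(1,t_1)\ge\Phi_h(1)+A_h-(\text{small})$. Combined with $u_t\ge m_0>0$ from Lemma~\ref{lem:C1}, this makes $u(1,t)$ strictly increase past $h$. Concretely, since $u(1,t)\ge u(1,0)+m_0t$, a contact at $t_1>0$ gives $u(1,t_1)>u(1,0)\ge\Phi_h(1)+A_h\ge h$, which contradicts $h\ge u(1,t_1)$. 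A contact at $t_1=0$ is excluded by the strict initial ordering after subtracting $\delta$.

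Letting $\delta\downarrow0$ gives $u\ge\Phi_h+c(h)t+A_h$. Hence $\liminf_{t\to\infty}\inf_r u/t\ge c(h)$.

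\textbf{Conclusion and main obstacle.} Letting $h\to\infty$ along admissible values and using $c(h)\uparrow c_*$ yields $u/t\to c_*$ uniformly in $r\in[0,1]$. The boundary first-contact step for the lower barrier is the main obstacle, and it is exactly where the defect \eqref{eq:defect} enters. We handle it using monotonicity of $u(1,\cdot)$ rather than a homogeneous Robin condition.

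For the final sentence of the proposition, note that the two-sided bound
\[
\Phi_h+c(h)t+A_h\le u\le\Phi_*+c_*t+C
\]
controls only the heights of $u$, with an error that grows like $(c_*-c(h))t$. It says nothing pointwise about $u_r$ or $u_t$, so it implies neither \eqref{eq:PV} nor \eqref{eq:slope-trap}. As in the discussion after \eqref{eq:transfer-A}, height ordering alone does not give derivative information.
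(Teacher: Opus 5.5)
Your upper bound is fine. Your first-contact argument for the lower barrier also works: the boundary case is closed by $u(1,t_1)>u(1,0)\ge\Phi_h(1)+A_h\ge h$, using $u_t\ge m_0$. The gap is the final step, letting $h\to\infty$ along admissible values. You impose both barrier conditions at $t=0$, and together they force
\[
 h\le\Phi_h(1)+A_h\le\min_{[0,1]}(u_0-\Phi_h)+\Phi_h(1)\le u_0(1).
\]
So the set of $h$ admissible at $t=0$ is bounded by $u_0(1)$. It may even be empty, since Assumption~\ref{ass:profiles} only supplies $\Phi_h$ for large $h$. Hence you cannot let $h\to\infty$ along admissible values. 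Your argument gives at most $\liminf_{t\to\infty}\inf_r u/t\ge\sup\{c(h):h\le u_0(1)\}$, which need not equal $c_*$. This is exactly the incompatibility the paper flags after \eqref{eq:barrier-condition}.

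The missing idea, which is what the paper's proof does, is to start the comparison at a late time $t_h$ chosen separately for each $h$. Lemma~\ref{lem:C1} gives $u(r,t)\ge u_0(r)+m_0t$, so
\[
 \min_{[0,1]}\bigl(u(\cdot,t_h)-\Phi_h\bigr)\ge\min_{[0,1]}(u_0-\Phi_h)+m_0t_h\ge h-\Phi_h(1)
\]
once $t_h$ is large. A common $A_h$ then exists at time $t_h$. Your interior and boundary arguments run verbatim on $[t_h,\infty)$; at the boundary one now has $u(1,t_1)>u(1,t_h)\ge\Phi_h(1)+A_h\ge h$. This gives $u\ge\Phi_h+c(h)(t-t_h)+A_h$, hence $\liminf_{t\to\infty}\inf_r u/t\ge c(h)$ for every large $h$. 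Only after that can you let $h\to\infty$. You should also delete the exploratory remarks about $\delta'$ and the ``wrong side'', since the monotonicity argument supersedes them.
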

\begin{proof}
The upper comparison gives the limsup $\le c_*$. For fixed $h$, compare after a sufficiently late time with an admissible lower translate, giving liminf $\ge c(h)$. Let $h\to\infty$.
\end{proof}

\section{Stationary cap}

For $B<N$, put $a=B/N$ and $R_0=a^{-1}$. The stationary profile with $H=B$ is
\begin{equation}\label{eq:cap}
 U_A(r)=A+R_0-\sqrt{R_0^2-r^2},\qquad
 U_A'(r)=\frac{ar}{\sqrt{1-a^2r^2}}.
\end{equation}
The constant $A$ is selected by the boundary law. To derive the profile, set $z=U_A'/\sqrt{1+U_A'^2}$ in $(r^{N-1}z)'=Br^{N-1}$. Radial regularity eliminates the integration constant, giving $z=Br/N$ and hence \eqref{eq:cap}. For $g(s)=s$, the unique constant is determined by $U_A(1)=1/\sqrt{R_0^2-1}$.

\begin{proposition}[Linear instability of the stationary cap]
If $g\in C^1$, $U_A'(1)=g(U_A(1))$, and $\gamma=g'(U_A(1))>0$, the radial linearization at $U_A$ has a positive principal eigenvalue.
\end{proposition}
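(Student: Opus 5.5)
The plan is to linearize directly and turn the question into a variational (Rayleigh quotient) problem for a self-adjoint Sturm--Liouville operator with a Robin boundary term. Put $S_0=(1+U_A'^2)^{1/2}$. Since $B<N$, we have $a<1$, so $R_0>1$ and $S_0$ is smooth and bounded on $[0,1]$. Write $u=U_A+\varepsilon\varphi$ in \eqref{eq:pde} and differentiate at $\varepsilon=0$. Because $H[U_A]^\alpha+b=B^\alpha+b=0$, the variation of the factor $\sqrt{1+u_r^2}$ contributes nothing. Only the variation of $H$ survives, multiplied by $\alpha B^{\alpha-1}S_0$. From the flux identity \eqref{eq:flux-detail}, $H=r^{1-N}(r^{N-1}z)_r$ with $z=p/S$ and $\partial z/\partial p=S^{-3}$. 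This gives the linearized problem
\[
 \varphi_t=\alpha B^{\alpha-1}S_0\,r^{1-N}\bigl(r^{N-1}S_0^{-3}\varphi_r\bigr)_r,\qquad \varphi_r(0,t)=0,\quad \varphi_r(1,t)=\gamma\varphi(1,t),
\]
where the last condition comes from linearizing $u_r(1)=g(u(1))$. The eigenvalue problem $\mathcal L\varphi=\lambda\varphi$ is then the weighted Sturm--Liouville problem
\[
 \bigl(r^{N-1}S_0^{-3}\varphi'\bigr)'=\lambda\, w\,\varphi,\qquad w=\frac{r^{N-1}}{\alpha B^{\alpha-1}S_0},
\]
with $\varphi'(0)=0$ and $\varphi'(1)=\gamma\varphi(1)$.

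Next I will establish the variational characterization. Multiply by $\varphi$ and integrate over $[0,1]$. The boundary term at $r=0$ vanishes because of $r^{N-1}$, and the one at $r=1$ equals $S_0(1)^{-3}\varphi(1)\varphi'(1)=\gamma S_0(1)^{-3}\varphi(1)^2$. So the principal eigenvalue is
\[
 \lambda_1=\sup_{\varphi\neq0}\frac{\gamma S_0(1)^{-3}\varphi(1)^2-\int_0^1 r^{N-1}S_0^{-3}\varphi'^2\dd r}{\int_0^1 w\varphi^2\dd r},
\]
the supremum being over radial $H^1(B_1)$ functions. I will justify that this supremum is attained and is an eigenvalue as follows. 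Lift to the ball; the weights $S_0^{-3}$ and $(\alpha B^{\alpha-1}S_0)^{-1}$ are smooth and bounded above and below on $\overline{B_1}$, and $a<1$ is used here. Then the form is a bounded perturbation of the Neumann Dirichlet form. The trace term $\varphi(1)^2$ is relatively form-bounded with bound $0$, by the trace inequality $\varphi(1)^2\le\epsilon\|\nabla\varphi\|^2+C_\epsilon\|\varphi\|^2$. The embedding $H^1\subset L^2$ is compact, so the associated self-adjoint operator has compact resolvent. Its top eigenvalue is given by the quotient, and it is simple with a positive eigenfunction (Krein--Rutman, or the standard Sturm--Liouville argument).

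Positivity then follows from the constant test function $\varphi\equiv1$. It satisfies the centre condition, its gradient term vanishes, and the quotient equals $\gamma S_0(1)^{-3}/\int_0^1 w\dd r>0$. Hence $\lambda_1>0$. The corresponding eigenfunction $e^{\lambda_1 t}\varphi_1$ is a growing mode of the radial linearization.

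The main obstacle is the functional-analytic bookkeeping, not the sign computation. One must check that the Robin trace term together with the degenerate factor $r^{N-1}$ at the centre still yields a self-adjoint operator with compact resolvent, whose top spectral point is an eigenvalue given by the quotient. The radial lift to $B_1\subset\R^N$, where the centre is an interior point as in Proposition~\ref{prop:interior}, is how I will handle this. Two things must be checked along the way: that $\varphi$ belongs to the form domain, and that the eigenvalue produced by the quotient satisfies the nonhomogeneous boundary condition $\varphi'(1)=\gamma\varphi(1)$ as its natural boundary condition.
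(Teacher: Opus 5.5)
Your proposal is correct and follows the paper's own proof. Since $S_0=(1-a^2r^2)^{-1/2}$, your operator, weight $w$ and boundary term coincide with the paper's $L=m^{-1}(Pv_r)_r$, $m$ and $P(1)\gamma v(1)^2$, and the constant test function gives the same bound $\lambda_1\ge P(1)\gamma/\int_0^1 m\dd r>0$; your compact-resolvent and attainment bookkeeping just spells out what the paper states briefly.
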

\begin{proof}
Put $k(r)=\sqrt{1-a^2r^2}$ and $K_0=\alpha B^{\alpha-1}$. At the stationary cap, $H^\alpha+b=0$, so the derivative of the factor $\sqrt{1+u_r^2}$ contributes zero. Linearization gives
\[
 Lv=\frac{K_0}{k(r)}r^{1-N}\bigl(r^{N-1}k(r)^3v_r\bigr)_r,
 \qquad v_r(0)=0,\quad v_r(1)=\gamma v(1).
\]
Writing $L=m^{-1}(Pv_r)_r$, integration by parts yields the Rayleigh quotient for the largest radial eigenvalue:
\[
 \lambda_1=\sup_{v\ne0}\frac{P(1)g'(U_A(1))v(1)^2-\int_0^1P(r)v_r^2\dd r}{\int_0^1m(r)v^2\dd r},
\]
where $P(r)=r^{N-1}(1-a^2r^2)^{3/2}$ and $m(r)=r^{N-1}\sqrt{1-a^2r^2}/(\alpha B^{\alpha-1})$. The supremum is over the radial weighted $H^1$ form domain. The Robin condition is natural in this variational formulation; trial functions need not satisfy it pointwise. Taking $v\equiv1$ gives
\[
 \lambda_1\ge\frac{P(1)\gamma}{\int_0^1m(r)\dd r}>0.
\]
The associated eigenmode grows like $e^{\lambda_1t}$ in the linearized problem.
\end{proof}
Thus convergence of all small positive data to this cap is not an automatic companion theorem. A threshold or restricted-data convergence statement would require a separate invariant-region or spectral argument. No such nonlinear small-data result is claimed here.

\section{Power-law Robin conditions}

Consider $u_r(1,t)=u(1,t)^\theta$, $\theta>0$, on positive boundary values. The finite-slope lower residual is
\[
 h-\bigl(\Phi_h(1)+A_h+c(h)t\bigr)^\theta\le0
\]
provided $A_h\ge h^{1/\theta}-\Phi_h(1)$. A fixed finite-slope profile is not a global upper barrier: its residual eventually becomes negative for every $\theta>0$, including $\theta=1$. The singular cup upper comparison is instead the first-contact argument of Lemma~\ref{lem:upper}; it must not be written as a classical boundary residual ``$+\infty-g(W)$''.

\begin{figure}[htbp]
\centering
\begin{tikzpicture}[>=Latex,font=\small,x=2.5cm,y=.8cm]
 \draw[->] (-.06,0)--(2.25,0) node[right] {$s$}; \draw[->] (0,-.1)--(0,4.45) node[above] {$g_\theta(s)$};
 \foreach \x in {1,2}{\draw (\x,.06)--(\x,-.06) node[below] {$\x$};}
 \foreach \y in {1,2,3,4}{\draw (.025,\y)--(-.025,\y) node[left] {$\y$};}
 \draw[blue!65!black,thick,domain=0:2,samples=100] plot (\x,{sqrt(\x)});
 \draw[black,thick,dashed] (0,0)--(2,2);
 \draw[red!70!black,thick,dash dot,domain=0:2,samples=100] plot (\x,{\x*\x});
 \fill (1,1) circle (1.3pt);
 \node[blue!65!black,anchor=west] at (2.35,3.35) {$s^{1/2}$};
 \node[anchor=west] at (2.35,2.55) {$s$};
 \node[red!70!black,anchor=west] at (2.35,1.75) {$s^2$};
\end{tikzpicture}
\caption{The three power-law boundary responses on $0\le s\le2$. The sublinear law has unbounded derivative at zero, so its linearized boundary coefficient needs a positive height lower bound.}
\label{fig:power}
\end{figure}

For $w=u_t$ and for $v=u_1-u_2$, differentiation gives respectively
\begin{align}
 w_r(1,t)&=\theta u(1,t)^{\theta-1}w(1,t),\label{eq:power-w}\\
 v_r(1,t)&=\gamma(t)v(1,t),\qquad
 \gamma(t)=\theta\int_0^1(u_2(1,t)+\sigma v(1,t))^{\theta-1}\dd\sigma.\label{eq:power-v}
\end{align}
If $0<m\le u_i(1,t)\le M$, then $0<\gamma(t)\le\theta\max\{m^{\theta-1},M^{\theta-1}\}$. Thus, on finite intervals, the power law changes the boundary coefficient but not the interior difference equation. The conditional convergence theorem remains valid if the independent inputs \eqref{eq:PV} and \eqref{eq:slope-trap} are established for this boundary law.

\begin{remark}[General increasing boundary laws]
For $g\in C^2(\R)$, $g'>0$, and $g(s)\to+\infty$, choose $S_h$ with $g(S_h)\ge h$ for the lower comparison. The singular upper comparison is unchanged. The difference boundary coefficient is the divided difference of $g$ and is bounded on bounded height ranges. This verifies the comparison identities, not an unconditional global-existence theorem.
\end{remark}

\begin{remark}
The finite-time part of the paper is complete conditional on the upper velocity estimate \eqref{eq:finiteV}: it gives $u$, $u_r$, $u_{rr}$ and uniform parabolicity with constants depending on $T$. The zero-number proposition in Section~4 supplies the time-uniform interior gradient estimate for the squeezed special data and identifies the precise transfer statement needed for arbitrary data. The convergence part is then complete conditional on the time-uniform inputs \eqref{eq:PV} and \eqref{eq:slope-trap}; it gives time-uniform Hessian estimates, compactness, and unique identification of all translated limits. The remaining issue is sharply localized in the Robin velocity estimate and, for a fully general initial-data theorem, in verifying the one-crossing transfer hypothesis in \eqref{eq:transfer-A}. This is the higher-dimensional analogue of Sections~4--5 of \cite{YuanZhao2022} and Section~4 of \cite{LouWangYuan2021}.
\end{remark}

\vskip 8mm
\begin{center}
{\large\bf Statements and Declarations}
\end{center}

\noindent
{\bf Funding}. No funding was received for conducting
this study.

\noindent
{\bf Competing interests}. The authors declare no
competing interests.

\noindent
{\bf Data availability}. No datasets were generated or
analysed during the current study.

\noindent
{\bf Author contributions}. The first and second authors ..., the third author conceived the study, developed the proofs; the forth author wrote the manuscript.

\noindent
{\bf Use of generative artificial intelligence}. During the preparation of this manuscript, the authors
used ChatGPT to assist with drafting and language editing. The
author subsequently reviewed and revised the
entire manuscript and takes full responsibility for
its content.

\end{document}